\theoremstyle{plain}
\newtheorem{thm}{Theorem}[section]
\newtheorem{prop}[thm]{Proposition}
\newtheorem{conj}[thm]{Conjecture}
\newtheorem{cor}[thm]{Corollary}
\theoremstyle{definition}
\newtheorem{ex}[thm]{Example}
\theoremstyle{remark}\newtheorem{rmk}[thm]{Remark}
\newcommand{\CC}{\mathbb{C}}
\newcommand{\ZZ}{\mathbb{Z}}
\newcommand{\br}{\mathbb{R}}
\DeclareMathOperator{\Ric}{Ric}
\DeclareMathOperator{\scal}{scal}
\DeclareMathOperator{\vol}{vol}
\DeclareMathOperator{\area}{A}
\DeclareMathOperator{\sgn}{sgn}
\begin{document}
\title{Minimal hypersurfaces in K\"ahler scalar flat ALE spaces}
\date{\today}

\author[Claudio Arezzo] {Claudio Arezzo}
\address{ICTP Trieste and Univ. of Parma, arezzo@ictp.it }
\author[Alberto Della Vedova] {Alberto Della Vedova}
\address{Universit\`a di Milano-Bicocca, alberto.dellavedova@unimib.it}
\author[Samreena]{Samreena}
\address{samreena01@gmail.com}

\maketitle

\begin{abstract}
In this paper we give a new general method to describe all K\"ahler scalar flat metrics on
$U(n)$-invariant domains of $\CC ^n$ in a way to be able to detect easily whether it can be completed 
to larger domains and which kind of ends they can have. This new approach makes possible to decide whether the
corresponding metrics contain a minimal sphere among the standard euclidean ones. We will show also how to 
check the stability (i.e. minimizing volume up to second order) of such submanifolds.
We apply the present analysis to a comparison between Penrose Inequality (\cite{pen:73}, which requires the presence of a stable minimal 
hypersurface) and Hein-LeBrun's  (\cite{hei:16}, which requires the ambient space to be K\"ahler and predicts the existence of a special divisor), and we will show that Penrose Inequality does indeed hold in any dimension for this class of manifolds, and that the two inequalities are indeed incomparable in that for these manifolds a stable minimal surface and a divisor never coexist.
  
\end{abstract}

%\begin{abstract}
%\dots
%\end{abstract}

\maketitle

\section{Introduction}

Studying the existence and the geometry of minimal submanifolds of a given riemannian manifold is one of the most classical aspects
of Differential Geometry. The main scope of this paper is to make some progress towards the solution of this problem for
K\"ahler scalar flat $U(n)$-invariant metrics. Such metrics appear naturally in a variety of geometric and physical problems especially 
with the further constraint of having an Asymptotically (Locally) Euclidean end (see for example \cite{kr}, \cite{ap},\cite{ap2}, \cite{pen:73}, \cite{hei:16},\cite{vh}). Of particular interest is to decide whether such manifold contain a compact {\em{stable}} (i.e. volume minimizing up to second order) minimal hypersurface. The existence of such objects is of great interest in a number of applications, yet they tend to be very rare, so it is natural to ask whether our curvature conditions prevent their existence. In this respect it is worth noticing that, by Simons Theorem \cite{simons}, the stronger assumption of non-negativity of the Ricci curvature would leave space only to special totally geodesic submanifolds, and at the same time volume minimizers in K\"ahler manifolds are often complex analytic submanifolds (hence never real hypersurfaces), again hinting to a possible negative answer to our problem (see for example \cite{Mic}, \cite{LS}, \cite{AS}, \cite{AM}, \cite{FS} and references therein).

In order to address this problem we have to reformulate the existence theory for scalar flat K\"ahler manifolds in a way suitable to detect 
whether it contains a possibly stable compact real hypersurface among a special class of submanifolds, namely euclidean spheres 
$S^{2n-1}(r)$ in a suitable choice of coordinates on a domain of $\CC ^n$. It is worth noticing that all such submanifolds have constant mean curvature for any $U(n)$-invariant K\"ahler metric (see also Proposition \ref{thm :meancurvature} for a useful explicit expression 
for the mean curvature as a function of $r$), hence forming a global CMC-foliation in the spirit started by Huisken-Yau (\cite{hy}) and later extended and analyzed by many authors (see e.g.  \cite{B}, \cite{Ch}, \cite{CEV}, \cite{H},  \cite{EM},  \cite{EM2}).

The study of $U(n)$-invariant K\"ahler metrics with various curvature properties has a long history, starting with Calabi (\cite{C}) and Tachibana-Liu (\cite{TL}), passing through 
Hwang-Singer (\cite{HS}), Cao (\cite{Cao}), Feldman-Ilmanen-Knopf (\cite{FIK}) and Bryant (\cite{Br}), up to the recent work in the cscK and extremal case by He-Li (\cite{HL}) and Taskent (\cite{T}).

In Section $3$ we present our new approach to the description of scalar flat metrics which extends and recovers by a complete different method previous work in this special case, allowing bounded domains and manifolds with boundary, which are of great interest towards the applications as described below.  This new description turns out to be very well adapted to study the problem of existence of stable minimal $U(n)$-invariant hypersurfaces. Indeed, in Section $4$ we reformulate minimality and stability for a given sphere in this new paradigma.

The output of this study turns out to be surprisingly simple and elegant for the minimal
hypersurface problem 

\begin{thm}
\begin{enumerate}
\item
A complete $U(n)$-invariant scalar flat K\"ahler metric on a manifold without boundary does not contain any compact minimal hypersurface.
\item
There exist manifolds with boundary with complete $U(n)$-invariant scalar flat K\"ahler metrics and an asymptotically euclidean end, which contain a stable minimal hypersphere.
\end{enumerate}
\end{thm}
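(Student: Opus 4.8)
The plan is to reduce everything to the one-variable ODE description of $U(n)$-invariant scalar flat K\"ahler metrics developed in Section 3, and to the formula for the mean curvature of the euclidean spheres $S^{2n-1}(r)$ announced in Proposition \ref{thm :meancurvature}. Writing the K\"ahler potential as a function of $s=\log|z|^2$ (or of $r=|z|$), a $U(n)$-invariant K\"ahler metric is encoded by a single positive ``momentum'' function, and the scalar flat equation becomes a linear second order ODE in that variable, whose general solution can be written explicitly; this is exactly the description the paper sets up. The mean curvature $H(r)$ of the sphere of radius $r$ is then a concrete expression in this momentum profile, and a sphere is minimal precisely when $H(r_0)=0$ for some $r_0$ in the range of definition. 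So both parts of the theorem become statements about the sign of one explicit function of one real variable.

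For part (1), I would argue by contradiction. Since every $U(n)$-invariant compact hypersurface that bounds a $U(n)$-invariant domain must be one of the spheres $S^{2n-1}(r)$ (a separation-of-variables / maximum-principle observation, or one can invoke that the metric is $U(n)$-invariant so the hypersurface, being a critical point of area among $U(n)$-invariant competitors after symmetrization, is a sphere), any compact minimal hypersurface forces $H(r_0)=0$ for some interior radius $r_0$. I then want to show that on a complete \emph{boundaryless} manifold the sign of $H$ is forced: near each end (either the smooth point $z=0$, where the metric closes up smoothly, or an ALE-type infinite end) the mean curvature of small/large spheres has a definite sign dictated by the asymptotics of the momentum profile — small geodesic spheres around a smooth point are mean-convex, and likewise at an infinite end the spheres are mean convex pointing inward — so $H$ has the same sign at both ``ends'' of the radial interval, while the scalar flat ODE controls the monotonicity/convexity of $H(r)$ tightly enough that it cannot vanish in between. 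Concretely I expect $H(r)$, or a suitable positive multiple of it, to satisfy a differential inequality coming from $\scal\equiv 0$ that prevents an interior zero once the boundary signs agree; this is the step where the special structure of the scalar flat equation (as opposed to, say, merely Ricci sign conditions) is doing the real work.

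For part (2), the strategy is constructive: exhibit an explicit member of the family from Section 3 defined on an annular range $r\in[r_0,\infty)$, with an asymptotically euclidean end as $r\to\infty$, a genuine boundary at $r=r_0$, and with the momentum profile chosen so that $H(r_0)=0$ — i.e. the boundary sphere is minimal — and $H(r)>0$ for $r>r_0$. Completeness toward the end is read off from the ALE normalization of the profile; that the boundary sphere is \emph{stable} is then checked via the second variation formula for area among (not necessarily $U(n)$-invariant) normal deformations, which for these spheres reduces — using that $S^{2n-1}(r_0)$ is minimal and the ambient metric is $U(n)$-invariant — to a spectral comparison: the relevant Jacobi operator is $-\Delta_{S^{2n-1}} - (|A|^2+\Ric(\nu,\nu))$ at $r_0$, and the stability inequality holds provided the first nonzero eigenvalue of the (rescaled, non-round but $U(n)$-invariant) Laplacian on the boundary sphere beats the potential term $|A|^2+\Ric(\nu,\nu)$ evaluated there. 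Since we are free to dial the remaining integration constant in the scalar flat family, I expect one can push $|A|^2+\Ric(\nu,\nu)$ at $r_0$ small (or even nonpositive) while keeping $H(r_0)=0$, forcing stability.

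The main obstacle, I expect, is part (1): ruling out \emph{all} interior zeros of $H$ on every complete boundaryless example, i.e. proving that the boundary-sign dichotomy genuinely propagates through the whole radial interval. This needs the precise form of the scalar flat ODE from Section 3 — it is not enough to know $\scal=0$ abstractly — and a careful case analysis of the possible ends (smooth point versus ALE cone, and the two orientations of a noncompact end). Translating ``minimal $U(n)$-invariant hypersurface'' into ``zero of $H(r)$'' is the only other place requiring care, to make sure no non-round $U(n)$-invariant hypersurfaces are missed; symmetrization (replacing a competitor by the sphere of the same enclosed volume cannot increase area in this symmetric setting) should dispatch it. Part (2) I expect to be routine once part (1)'s machinery is in place, since it is a matter of choosing constants in an already-parametrized family and running the standard second variation computation.
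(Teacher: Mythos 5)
Your high-level framing --- encode the metric by a radial profile, reduce ``compact minimal hypersurface'' to a zero of the mean curvature $H$ of some $S^{2n-1}(r)$, then exploit the scalar flat ODE --- is the same as the paper's, but the two decisive steps are left as expectations, and for part (1) the mechanism you propose is not the one that works. You hope that on a complete boundaryless manifold the sign of $H$ is forced to agree at both ends of the radial interval and that a differential inequality coming from $\scal\equiv 0$ forbids an interior zero. But scalar flatness does not forbid zeros of $H$: part (2) of the very same theorem asserts that scalar flat $U(n)$-invariant metrics carrying (even stable) minimal spheres exist, so no local ODE comparison of the kind you envisage can close the argument. The actual mechanism is global and lives in the phase plane of Section 3: writing the scalar flat equation as the first order system \eqref{eq::system} for $(x,y)$ with $v_t=1+x+y>0$, minimality of $S^{2n-1}(t_0)$ is the linear condition $(n-1)x(t_0)+ny(t_0)+2n-1=0$ (Proposition \ref{minsp}), which together with admissibility forces $x(t_0)>n-1$; the level sets of $F(x,y)=|x|^{1-n}|y|^n$ passing through that half-line lie in the region where the trajectory reaches the degeneracy line $1+x+y=0$ or escapes to infinity at a \emph{finite} value of $t$, so the metric is defined only up to a finite radius and necessarily has a boundary there. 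This requires the full enumeration of trajectory behaviours (items (1)--(5) of Section 3), which is exactly the case analysis you acknowledge needing but do not supply; your dichotomy ``smooth point at $z=0$ versus ALE end'' misses, for instance, the second complete end of the Fu--Yau--Zhou metric and the divisor ends of Subsection \ref{inco}. Note also that the reduction from an arbitrary (not a priori $U(n)$-invariant) compact minimal hypersurface to a sphere of the family is done by sliding the CMC foliation $S^{2n-1}(t)$ and applying the maximum principle; symmetrization, as you suggest, would only treat invariant competitors or genuine area minimizers.

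For part (2), the claim that one can ``dial the remaining integration constant'' to make $|A|^2+\Ric(\nu,\nu)$ small at $r_0$ while keeping $H(r_0)=0$ is unsubstantiated and too optimistic as stated: within the scalar flat family, stability of a minimal sphere is a nontrivial constraint, equivalent to $n-1<x(t_0)\le 2n-1$ (Proposition \ref{minsp}), and the same family contains \emph{unstable} minimal spheres (e.g.\ on the level set $\lambda=\lambda_{FYZ}$). One must actually compute the second variation --- the paper differentiates the area formula \eqref{area} twice to obtain the criterion \eqref{stable1}, rewrites it in the $(x,y)$ variables, and passes from stability within the invariant family to full stability via symmetric criticality --- and then exhibit the arcs with $\lambda\in[2n-1,\lambda_{FYZ})$ and $y<0$ that satisfy it. So the proposal is a correct roadmap for what objects to study, but it is missing the phase-plane argument that constitutes the proof of part (1) and the explicit stability computation needed for part (2).
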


It would be of great interest to extend the above results removing the $U(n)$-invariance (see e.g. \cite{Trinca} for some partial result in the HyperK\"ahler case). It is important to remark that 
given our examples a possible proof of this non-existence extension cannot follow a local argument as in the above mentioned Simons
Theorem (\cite{simons}) or the many results using the (complex version) of the second variation of volume (for example \cite{Mic}, \cite{LS}, \cite{AS}, \cite{AM}, \cite{FS}) but has to be a much more delicate global argument.
 
Not directly related to the minimal surface problem, it is nevertheless interesting to observe that, arguing as in Subsection 3.1, 
one can prove that 
the only possible ends of a complete scalar flat $U(n)$-metric is $\CC^n/\ZZ^k$ for some $k$, with the only exception of the 
Fu-Yau-Zhu metric (\cite{fu:16}) and its companion described in Section $2$, part $(3)$.

As a general application of our study we compare in Section $5$ two beautiful versions of relations between the ADM mass and the volumes of  special submanifolds. 

The first is the classical one conjectured by Penrose in \cite{pen:73} in the general riemannian setting, and proved by Huisken-Ilmanen (\cite{hui:01}) and Bray \cite{Bra:2001, Bray:2009} up to dimension eight:

\begin{conj} \label{RiemannianPenrose}
Let $(M^d,g)$ be a complete asymptotically euclidean manifold
with non-negative scalar curvature of real dimension $d$, which has an outermost minimal hypersurface $\Sigma$. Then the ADM mass $m_{ADM}$ satisfies the following 
\begin{equation}\label{penrose :inequality}
    m_{ADM} \geq \frac{1}{2}\left(\frac{V(\Sigma)}{V_{E}(\Sigma_{1})}\right)^{\frac{d-2}{d-1}}\,,
\end{equation}
where $V(\Sigma)$ and $V_{E}(\Sigma_{1}) = \frac{2\pi^d}{{(d-1)}!}$ are the volumes of an outermost minimal hypersurface $\Sigma$ and of the euclidean unit hypersphere respectively. Moreover, equality holds if and only if $M^d$ is isometric to a spatial Schwarzschild manifold outside its horizon.
\end{conj}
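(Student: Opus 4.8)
The plan is to reproduce the two established proofs of this inequality rather than to improve on them: Huisken--Ilmanen's weak inverse mean curvature flow when $d=3$, and Bray's conformal flow, with the Bray--Lee extension, in the remaining dimensions $d\le 7$. Take $d=3$ first and, using the outermost hypothesis, reduce to the case that $\Sigma$ is connected: the outermost minimal hypersurface bounds the outward minimizing hull and it suffices to treat the component of largest area. Flow $\Sigma_0=\Sigma$ by inverse mean curvature flow, $\partial_t x=\nu/H$ with $H>0$, and follow the Hawking mass
\begin{equation*}
m_H(\Sigma_t)=\frac{|\Sigma_t|^{1/2}}{(16\pi)^{3/2}}\left(16\pi-\int_{\Sigma_t}H^2\,dV\right).
\end{equation*}
The central computation is \emph{Geroch monotonicity}: differentiating $m_H$ along a smooth flow and substituting the evolution equations for the area element and for $H$, one rewrites the ambient curvature term via the Gauss equation, and the Gauss--Bonnet theorem on the connected surface $\Sigma_t$ (so $\chi(\Sigma_t)\le 2$) together with the hypothesis $\scal\ge 0$ gives $\frac{d}{dt}m_H(\Sigma_t)\ge 0$. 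At $t=0$ minimality forces $H\equiv 0$, so $m_H(\Sigma_0)=\sqrt{|\Sigma|/(16\pi)}$, which is precisely the right-hand side of \eqref{penrose :inequality} for $d=3$; and as $t\to\infty$ the level surfaces exhaust the asymptotically euclidean end and round up, so $\limsup_{t\to\infty}m_H(\Sigma_t)\le m_{ADM}$. Chaining the two endpoint values through the monotonicity yields the inequality.

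The hard part is that smooth inverse mean curvature flow generically fails to exist — the speed $1/H$ degenerates wherever $H\to 0$ — so one must pass to Huisken--Ilmanen's weak formulation. I would realize $\Sigma_t=\partial\{u<t\}$ for a proper function $u\ge 0$ solving the degenerate elliptic equation $\diver(\nabla u/|\nabla u|)=|\nabla u|$ in the variational sense that $u$ minimizes an associated energy functional against all competitors agreeing with it outside a compact set, with $\{u=0\}$ equal to the boundary of the outward minimizing hull of $\Sigma$. Along this weak flow the level sets can \emph{jump} outward to a minimizing hull; the decisive point is that area is continuous across such a jump, while $\int H^2$ can only decrease, so Geroch monotonicity survives the jumps. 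Proving existence and the necessary regularity for $u$ (via elliptic $\varepsilon$-regularisations and compactness), controlling the connectedness and topology of the jump surfaces so that Gauss--Bonnet still applies, and establishing the precise asymptotics $m_H(\Sigma_t)\to m_{ADM}$ near infinity, is where essentially all of the analytic difficulty lies; this is the step I expect to be the real obstacle.

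For $4\le d\le 7$ the Gauss--Bonnet step is unavailable, so I would instead run Bray's conformal flow: set $g_t=u_t^{4/(d-2)}g$, where $u_t$ is harmonic outside $\Sigma_t$, vanishes on $\Sigma_t$, and tends to $1$ at infinity, continually updating $\Sigma_t$ to be the outermost minimal hypersurface of $g_t$. One checks that the horizon area stays constant, the ADM mass is nonincreasing, $\scal\ge 0$ is preserved (maximum principle for the conformal Laplacian), and the flow converges to a spatial Schwarzschild metric, for which \eqref{penrose :inequality} is an equality; the minimal hypersurface regularity theory entering here is what restricts the dimension. Finally, for the rigidity clause, equality forces the relevant monotonicity to hold as an identity, hence the evolving hypersurfaces are totally umbilic with no scalar-curvature defect and the flow has no jumps; a standard warped-product/ODE rigidity argument then identifies $(M^d,g)$ with the Schwarzschild manifold outside its horizon.
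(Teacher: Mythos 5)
The first thing to note is that the paper does not prove this statement: it is stated as a \emph{Conjecture}, attributed to Penrose, and the surrounding text only records that it has been proved by others (Huisken--Ilmanen for $d=3$, Bray and Bray--Lee up to dimension seven/eight); in general dimension it is open. What the paper actually establishes (Corollary \ref{phl}, proved in Section 5) is the much weaker assertion that the inequality holds \emph{in every dimension for the special class of $U(n)$-invariant scalar flat K\"ahler ALE metrics containing a stable minimal sphere}, and it does so by a completely different, elementary route: the explicit formulas $m_{ADM}=-y_0/(n-1)$ and the expression \eqref{voll} for $V(\Sigma)$ in terms of the initial point $(x_0,y_0)$ of the phase-plane trajectory, combined with the minimality relation $(n-1)x_0+ny_0+2n-1=0$, reduce the inequality to $-y_0/(n-1)\ge -(n+y_0)/(n-1)$, i.e.\ $0\ge -n$. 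Your proposal, which reproduces the inverse-mean-curvature-flow and conformal-flow arguments from the literature, is therefore aimed at a different (and strictly harder) target than anything the paper proves, and it cannot close the statement as written: for $d\ge 8$ no proof is known, and your outline offers nothing new there.

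Even restricted to low dimensions, two concrete gaps remain. First, in the $d=3$ IMCF argument your reduction ``it suffices to treat the component of largest area'' is not enough for the inequality as stated: Geroch monotonicity bounds $m_{ADM}$ from below by $\sqrt{|\Sigma'|/(16\pi)}$ for a single connected component $\Sigma'$ only, and since $\bigl(\sum_i A_i\bigr)^{1/2}\ne\sum_i A_i^{1/2}$ this does not yield the inequality with $V(\Sigma)$ the total volume of a disconnected outermost horizon --- that case genuinely requires Bray's conformal flow even in dimension three. Second, as you yourself acknowledge, the existence, regularity, jump analysis and asymptotics of the weak flow, and the convergence of Bray's flow to Schwarzschild, are where essentially all of the analytic difficulty lies; deferring them means the proposal is a survey of the known strategy rather than a proof. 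None of this machinery is needed for the paper's purposes: the only thing the paper uses and verifies is the explicit inequality in its symmetric setting, where both sides are computable in closed form.
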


In the K\"ahler setting Hein-LeBrun (\cite{hei:16}) proved the following

\begin{thm}\label{thm:KPenrose} 
Let $(M,g,J)$ be an asymptotically euclidean complete K\"ahler manifold with non-negative scalar curvature of complex dimension $n$. Then $(M,J)$ carries a canonical divisor $\mathcal{D}$ that is expressed as a sum $\sum n_{i} D_{i}$ of compact complex hypersurfaces with positive integer coefficients together with the property that $\cup D_{i} \ne \emptyset$ whenever $(M,J) \ne \mathbb{C}^{n}$. In term of this divisor, we have 
\begin{equation}\label{eq:of:Penrose:type:}
     m_{ADM} \geq \frac{(n-1)!}{(2n-1) \pi^{n-1} } \sum_{j} n_{j} vol(D_{j}),
\end{equation}
where the equality holds if and only if $(M^{n},g,J)$ is a scalar flat K\"ahler.
\end{thm}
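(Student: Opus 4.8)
The plan is to deduce Theorem \ref{thm:KPenrose} from a \emph{K\"ahler mass formula}, valid for every asymptotically euclidean K\"ahler manifold, that expresses $m_{ADM}$ as a manifestly nonnegative multiple of a scalar curvature integral plus a purely cohomological term, and then to identify that cohomological term with the right-hand side of \eqref{eq:of:Penrose:type:}. Throughout I write $\rho_g=-i\partial\bar\partial\log\det(g_{j\bar k})$ for the Ricci form and $\scal_g=2\,\tr_\omega\rho_g$ for the scalar curvature, so that $\rho_g\wedge\omega^{n-1}=\tfrac{(n-1)!}{2}\,\scal_g\,dV_g$.

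\emph{Step 1: the mass formula.} Starting from the ADM mass written as the flux of $(\partial_i g_{jk}-\partial_k g_{ij})$ across large coordinate spheres, I would use the K\"ahler condition to recast it in complex terms: on the euclidean end one picks a K\"ahler potential $\varphi$ with $\varphi-|z|^2$ decaying at the AE rate and integrates by parts twice, invoking $\rho_g=-i\partial\bar\partial\log\det(g_{j\bar k})$. The boundary contributions involving $\log\det(g_{j\bar k})$, which tends to $0$ on the end, should cancel, leaving an identity of the form
\begin{equation}\label{eq:massformula}
 m_{ADM}\;=\;c_n\int_M \scal_g\,\frac{\omega^n}{n!}\;-\;c_n'\big\langle\,2\pi c_1(M),\,[\omega]^{n-1}\,\big\rangle,
\end{equation}
with explicit positive universal constants $c_n,c_n'$; the pairing is a well-defined number because $\rho_g$ decays fast enough on the end to be integrable and cohomologous to zero there, hence to represent a class in compactly supported cohomology dual to $c_1(K_M)=-c_1(M)$. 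Heuristically, \eqref{eq:massformula} says that the scalar curvature integral exceeds its ``topological value'' $\langle 2\pi c_1(M),[\omega]^{n-1}\rangle$ by a fixed multiple of the mass. \textbf{This is where I expect essentially all the difficulty to lie}: justifying the integrations by parts at the borderline AE decay, and checking that no boundary term at infinity is dropped, so that $c_n,c_n'$ come out exactly as needed.

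\emph{Step 2: the canonical divisor and the cohomological term.} On the euclidean end the form $\Omega_0=dz^1\wedge\cdots\wedge dz^n$ is a nowhere-vanishing holomorphic section of $K_M$. Using the standard holomorphic structure at infinity of AE K\"ahler manifolds — in particular that one may compactify by adding a divisor at infinity, so that $K_M$ extends and $\Omega_0$ extends to a meromorphic section with poles confined to infinity — I would take $\mathcal{D}=\sum_j n_jD_j$ to be the divisor in $M$ of zeros of that extension; it is effective, with $n_j\in\ZZ_{>0}$, supported on the compact core, and Poincar\'e dual in compactly supported cohomology to $c_1(K_M)$. If $\mathcal{D}$ were empty, $K_M$ would be holomorphically trivial and $M$ Calabi--Yau with a single euclidean end, forcing $(M,J)=\CC^n$; this is the clause ``$\cup D_i\neq\emptyset$ unless $(M,J)=\CC^n$''. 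Pairing with $[\omega]^{n-1}$ and applying Wirtinger's theorem, $\int_{D_j}\omega^{n-1}=(n-1)!\,\vol(D_j)$, gives $\langle c_1(K_M),[\omega]^{n-1}\rangle=(n-1)!\sum_j n_j\,\vol(D_j)$, so
\begin{equation}\label{eq:massformula2}
 -\big\langle\,2\pi c_1(M),[\omega]^{n-1}\,\big\rangle\;=\;2\pi(n-1)!\sum_j n_j\,\vol(D_j).
\end{equation}

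\emph{Step 3: conclusion.} Substituting \eqref{eq:massformula2} into \eqref{eq:massformula} and collecting the universal constants, the formula becomes
\begin{equation}\label{eq:finalmass}
 m_{ADM}\;=\;c_n\int_M \scal_g\,\frac{\omega^n}{n!}\;+\;\frac{(n-1)!}{(2n-1)\pi^{n-1}}\sum_j n_j\,\vol(D_j).
\end{equation}
Since $\scal_g\geq0$ by hypothesis, the first term is nonnegative and \eqref{eq:of:Penrose:type:} follows; equality forces $\int_M\scal_g\,dV_g=0$, hence $\scal_g\equiv0$, i.e. $(M,g,J)$ is scalar-flat K\"ahler, and the converse is clear. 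In summary, the only genuine obstacle is Step 1 — pinning down the K\"ahler mass formula \eqref{eq:massformula} with the correct constants and full control of the asymptotic boundary terms — while Steps 2 and 3 are essentially formal once one grants the standard holomorphic description of AE K\"ahler ends used to build $\mathcal{D}$.
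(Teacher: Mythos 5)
This statement is not proved in the paper at all: it is Hein--LeBrun's theorem, quoted verbatim from \cite{hei:16} as background for the comparison carried out in Section 5. So there is no ``paper's own proof'' to measure your argument against; what can be said is how your sketch compares with the actual proof in \cite{hei:16}. Structurally you have reconstructed it correctly: Hein--LeBrun's Theorem A is precisely a mass formula of the shape of your \eqref{eq:massformula}, expressing $m_{ADM}$ as a positive universal multiple of $\int_M \scal_g\,d\mu$ plus $-\langle c_1,[\omega]^{n-1}\rangle/((2n-1)\pi^{n-1})$ (with the pairing taken against a compactly supported representative), and the inequality then follows exactly as in your Step 3 once the cohomological term is identified with the volume of an effective canonical divisor.

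The genuine gaps are the two places where you have waved your hands, and they are not small. First, the mass formula itself is the analytic heart of the matter: at the borderline AE decay rate the integration by parts is delicate, the scalar curvature must be assumed integrable, and the constants $c_n,c_n'$ are exactly what make the final inequality come out as stated; none of this is ``formal.'' Second, and more seriously, your construction of $\mathcal{D}$ in Step 2 rests on ``compactifying by adding a divisor at infinity, so that $\Omega_0$ extends to a meromorphic section with poles confined to infinity.'' Such a compactification is not available for a general AE K\"ahler manifold, and this is not how the divisor is produced. The correct route (and the one in \cite{hei:16}) is to show that $M$ is $1$-convex, hence a proper modification of a Stein space with compact exceptional set, and then to extend the holomorphic $n$-form $dz^1\wedge\cdots\wedge dz^n$ from the end across the compact core by a Hartogs-type argument; this requires $n\ge 2$ and some structure theory, and the zero divisor of the resulting section of $K_M$ is $\mathcal{D}$. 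Likewise your one-line claim that $\mathcal{D}=\emptyset$ forces $(M,J)=\CC^n$ is itself a theorem needing the same structure theory, not a consequence of ``Calabi--Yau with one euclidean end.'' In short: right skeleton, but both load-bearing steps are asserted rather than proved, and the proposed mechanism for Step 2 would not work as written.
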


In Section $5$ we will show how our new representation of $U(n)$-invariant scalar flat K\"ahler metrics is very efficient also in computing various riemannian quantities as the ones involved in these Inequalities. By a direct computation we will prove the following  

\begin{cor}\label{phl}
\begin{itemize}

\item
There are no $U(n)$-invariant scalar flat ALE manifolds containing both a stable minimal $S^{2n-1}(t)$ and a divisor.
\item
The Riemannian Penrose Inequality holds in any dimension for all scalar flat K\"ahler metrics containing a stable minimal $S^{2n-1}(t)$.
\end{itemize}

\end{cor}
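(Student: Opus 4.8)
The plan is to work directly with the explicit description of $U(n)$-invariant scalar flat K\"ahler metrics developed in Section 3, and to translate all the quantities appearing in the two inequalities—the mean curvature of the spheres $S^{2n-1}(t)$, the stability operator, the ADM mass, and the volume of a candidate divisor—into functions of the single radial variable $r$ (or whatever profile function the Section 3 normal form uses). For the first bullet, I would argue by contradiction: suppose a $U(n)$-invariant scalar flat ALE manifold carries both a stable minimal sphere $S^{2n-1}(t)$ and a compact complex hypersurface. Since the metric is $U(n)$-invariant and the only $U(n)$-invariant complex hypersurface possibilities on a domain of $\CC^n$ are (components of) the exceptional divisor over the origin, the presence of a divisor forces the space to be a resolution-type geometry whose profile function has a prescribed behaviour near $r=0$ (it caps off on a $\CC P^{n-1}$ rather than closing up smoothly or reaching a boundary). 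On the other hand, by the reformulation of minimality and stability in Section 4 (and the explicit mean curvature formula of Proposition \ref{thm :meancurvature}), the existence of a stable minimal $S^{2n-1}(t)$ forces the same profile function to satisfy an incompatible constraint—roughly, that it must have a boundary component (part (2) of the first Theorem already signals that stable minimal spheres only occur on manifolds \emph{with boundary}). I expect the heart of this bullet to be showing that ``has a smooth divisor end over the origin'' and ``has a boundary on which a stable minimal sphere sits'' are mutually exclusive conditions on the ODE data, and this is precisely where I anticipate the main obstacle: one must rule out every intermediate profile, not just the two extreme model cases, so the argument has to be a genuine global statement about the solution of the scalar flat equation rather than a local jet computation.

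For the second bullet I would start from the examples produced in part (2) of the first Theorem: a manifold with boundary, complete $U(n)$-invariant scalar flat K\"ahler metric, asymptotically euclidean end, containing a stable minimal hypersphere $\Sigma = S^{2n-1}(t)$. On such a manifold, by the first bullet, there is \emph{no} divisor, so the Hein–LeBrun divisor $\mathcal D$ is empty (consistently, these spaces are biholomorphic to a domain in $\CC^n$, so the canonical divisor can be taken trivial), and the right-hand side of \eqref{eq:of:Penrose:type:} would be $0$; this means Hein–LeBrun gives only $m_{ADM}\ge 0$ and in particular does \emph{not} imply the Penrose bound \eqref{penrose :inequality}, which is the ``incomparable'' half of the Corollary. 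The substantive claim is then that \eqref{penrose :inequality} nonetheless holds. Here I would compute $m_{ADM}$ from the asymptotic expansion of the metric in the Section 3 normal form—this is the ``efficient computation of Riemannian quantities'' the authors advertise—obtaining $m_{ADM}$ as an explicit expression in the boundary radius and the integration constants of the profile function, and similarly express $V(\Sigma)$, the volume of the minimal sphere, in the same data.

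The final step is the inequality itself: with $m_{ADM}$ and $V(\Sigma)$ both written out explicitly, \eqref{penrose :inequality} becomes a concrete elementary inequality between functions of one or two real parameters, subject to the constraints that (i) the metric be scalar flat and complete up to the boundary, (ii) the sphere be minimal, and (iii) it be stable. I would use minimality to eliminate one parameter (minimality pins down $t$ in terms of the other data via the vanishing of the Proposition \ref{thm :meancurvature} mean curvature function), then feed this into the mass and volume formulas, and finally check the resulting one-variable inequality by calculus, identifying the equality case with the Schwarzschild profile as predicted by Conjecture \ref{RiemannianPenrose}. The main obstacle in this half is bookkeeping: making sure the asymptotically euclidean normalization used to define $m_{ADM}$ matches the normalization in the Section 3 description (so that no stray dimensional constant corrupts the comparison with $V_E(\Sigma_1) = \tfrac{2\pi^n}{(2n-1)!}$), and confirming that the stability hypothesis—rather than mere minimality—is what is actually needed to push the elementary inequality through in all dimensions.
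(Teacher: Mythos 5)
Your overall strategy coincides with the paper's (translate everything into the Section~3 normal form, show mutual exclusivity via the classification, then compute $m_{ADM}$ and $V(\Sigma)$ explicitly and reduce Penrose to an elementary inequality), but the decisive step of the first bullet is exactly the one you defer: you name the dichotomy ``divisor end over the origin'' versus ``boundary carrying a stable minimal sphere'' as the anticipated main obstacle and do not close it. The paper closes it with no global ODE analysis beyond what Section~3 already provides: by Proposition \ref{minsp}, a minimal $S^{2n-1}(t_0)$ is \emph{stable} precisely when $n-1<x(t_0)\le 2n-1$ (equivalently $-n>y(t_0)\ge 1-2n$), which forces the trajectory to lie on a level set with $\lambda\in[2n-1,\lambda_{FYZ})$ and $y<0$; by the case analysis of Section~3 every such arc is defined only on an exterior domain $\{|z|^2>R_\lambda\}$ with $R_\lambda>0$ finite, which contains no compact complex hypersurface, whereas a compact divisor arises only when the trajectory terminates on a coordinate axis with $B>0$ and the metric extends to $\Gamma_k\backslash Bl_0\CC^n$ as in Subsection \ref{inco}. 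So the ``global statement'' you worry about is really a pointwise constraint on the initial point in the $(x,y)$-plane; without invoking the stability inequality in phase-plane form your argument does not rule out the intermediate profiles, and that is a genuine gap.

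For the second bullet your plan is right but unexecuted, and two of your expectations are off. With the normalization $u_t(0)=1$ at the minimal sphere one gets $u_{tt}(0)=1+x_0+y_0$ and, using minimality $(n-1)x_0+ny_0+2n-1=0$, the mass computation in the paper gives $m=-y_0/(n-1)$ while $1+x_0+y_0=-(n+y_0)/(n-1)$; the Penrose inequality then reduces to $-y_0\ge -(n+y_0)$, i.e.\ $0\ge -n$, which holds trivially. In particular there is no one-variable calculus to do and no Schwarzschild equality case to identify (the inequality is strict here); stability enters not to ``push the inequality through'' but (i) to confine $(x_0,y_0)$ to the region where the metric is an exterior ALE domain with $m>0$, and (ii) via the maximum-principle argument showing the stable sphere is outermost, which you omit but which is needed even to invoke Conjecture \ref{RiemannianPenrose}.
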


Again we believe it would be of great interest to generalize the above analysis to more general classes of K\"ahler manifolds to extend this
dicothomy.

\section{Preliminaries on $U(n)$-invariant metrics and their scalar curvature}

In this section we collect few basic well known facts on $U(n)$-invariant K\"ahler metrics for which a large literature is
available (see e.g. \cite{fu:16}, \cite{HL} and \cite{Cao}).
Let $\varphi$ be a K\"ahler potential for a K\"ahler form $\omega = \sqrt{-1}\partial \bar \partial \varphi$ on a  $U(n)$-invariant domain of $\mathbf C^n \setminus\{0\}$.
Suppose there exists a smooth function $f$ on $(a,b) \subset (0,+\infty)$ such that $\varphi(z) = f(|z|^2)$. We have
\begin{equation}\label{omegaspherical} \omega = \sqrt{-1} \left( f''\partial |z|^2 \wedge \bar \partial |z|^2 + f'\partial \bar \partial |z|^2 \right) \end{equation}
For $f(|z|^2)=|z|^2/2$, $\omega_E = \sqrt{-1} \partial  \bar \partial f$ is the Euclidean form.

The volume form of $\omega$ is given by
$$ \frac{\omega^n}{n!} = \frac{(\sqrt{-1})^n}{n!}\left[ n f''(f')^{n-1} \partial |z|^2 \wedge \bar \partial |z|^2 \wedge (\partial \bar \partial |z|^2)^{n-1} + (f')^n (\partial \bar \partial |z|^2)^n \right].$$
Since \begin{equation}\label{|z|^2} \partial \bar \partial |z|^2 = \sum_{i=1}^n dz_i \wedge d \bar z_i, \qquad \partial |z|^2 \wedge \bar \partial |z|^2 = \sum_{i,j=1}^n \bar z_i z_j dz_i \wedge d \bar z_j,\end{equation}
we have
$$ (\partial \bar \partial |z|^2)^n = n! dz_1\wedge d \bar z_1 \wedge \dots \wedge dz_n \wedge d\bar z_n,$$
and 
$$ \partial |z|^2 \wedge \bar \partial |z|^2 \wedge (\partial \bar \partial |z|^2)^{n-1} = (n-1)! |z|^2 dz_1\wedge d \bar z_1 \wedge \dots \wedge dz_n \wedge d\bar z_n,$$
whence
$$ \frac{\omega^n}{n!} = 2^n(f')^{n-1}\left( f'+ |z|^2f''\right) \Omega_E, $$
where $\Omega_E = (\sqrt{-1}/2)^n dz_1\wedge d \bar z_1 \wedge \dots \wedge dz_n \wedge d\bar z_n$ is the Euclidean volume form.

The Ricci form of $\omega$ is given by
\begin{equation*}
 \Ric(\omega) = -\sqrt{-1} \partial \bar \partial \log \frac{\omega^n / n!}{\Omega_E} = -\sqrt{-1}\partial \bar \partial \log \left[ (f')^{n-1}\left( f'+ |z|^2f''\right)\right]. 
 \end{equation*}
 
It will be useful in the sequel to consider the logarithmic change of coordinates $t=\log (|z|^2/2)$, $u(t) = f(|z|^2)$. Indeed, from $f'=\frac{u_t}{|z|^2}=\frac{u_t}{2e^t}$ and $f''= \frac{u_{tt}}{|z|^4} - \frac{u_t}{|z|^4} = \frac{u_{tt}-u_t}{4e^{2t}}$ we get\footnote{Recal that $d= \partial + \bar\partial$, and $d_c=\sqrt{-1}(\bar \partial - \partial)$.}
\begin{eqnarray*}
\omega 
&=& \sqrt{-1} \left( \left(\frac{u_{tt}}{|z|^4} - \frac{u_t}{|z|^4}\right)\partial |z|^2 \wedge \bar \partial |z|^2 + \frac{u_t}{|z|^2}\partial \bar \partial |z|^2 \right) \\
&=& \sqrt{-1} \left( \left(u_{tt} -  u_t\right) \partial \log|z|^2 \wedge \bar \partial \log|z|^2 + u_t \left(\partial \bar \partial \log|z|^2 +\partial \log|z|^2 \wedge \bar \partial \log|z|^2 \right) \right) \\
&=& \sqrt{-1} \left( u_{tt} \partial \log|z|^2 \wedge \bar \partial \log|z|^2 + u_t \partial \bar \partial \log|z|^2 \right)
\end{eqnarray*}
\begin{eqnarray*}
\omega &=& \sqrt{-1} \left( u_{tt} \partial \log|z|^2 \wedge \bar \partial \log|z|^2 + u_t \partial \bar \partial \log|z|^2 \right) \\
&=& \sqrt{-1} \left( u_{tt} \partial t \wedge \bar \partial t + u_t \partial \bar \partial t \right)\\
&=& \frac{1}{2} \left(u_{tt} dt \wedge d_ct + u_t dd_ct \right)
\end{eqnarray*}
and
%\begin{align*}
% \omega^n/n! 
%& = \frac{2^nu_t^{n-1}}{2^{n-1}e^{(n-1)t}}\left(\frac{u_t}{2e^t}+2e^t\frac{u_{tt}-u_t}{4e^{2t}} \right)\\
%&= \frac{2^nu_t^{n-1}}{2^{n-1}e^{(n-1)t}}\left(\frac{u_{tt}}{2e^t} \right)
%\end{align*}
\begin{equation}\label{volume}
 \omega^n/n! = \frac{u_t^{n-1}u_{tt}}{e^{nt}}\Omega_E
 \end{equation}
 whence
 \begin{equation}\label{Ricci}
\Ric(\omega) = - \sqrt{-1} \partial \bar \partial \log \left( \frac{u_t^{n-1}u_{tt}}{e^{nt}} \right).
 \end{equation}

\begin{ex}\label{ex::ricci_flat}
Let $n=2$, and for all $b\geq 0$ let 
$$u(t) = b\log\frac{\sqrt{e^{2t}+b^2}-b}{\sqrt{e^{2t}+b^2}+b}+2\sqrt{e^{2t}+b^2}$$
Note that
$$ u(t) = 2bt -2 b\log(\sqrt{e^{2t}+b^2}+b)+2\sqrt{e^{2t}+b^2}.$$
Thus by \eqref{Ricci} the K\"ahler form
$$ \omega_b = \sqrt{-1} \partial \bar \partial \left( b\log\frac{\sqrt{|z|^4+4b^2}-2b}{\sqrt{|z|^4+4b^2}+2b}+\sqrt{|z|^4+4b^2}\right)$$
is Ricci-flat on $\mathbf C^2 \setminus\{0\}$ for all $b \geq 0$.
For $b=0$ it reduces to twice the Euclidean metric, while for $b=\frac{1}{2}$
we recognize the Eguchi-Hanson metric (see \cite[Example 7.2.2]{Joyce}).
\end{ex}

By Formula \eqref{Ricci}, one has
$$ \scal (\omega) = -\Delta \log \left[ \frac{u_t^{n-1}u_{tt}}{|e^{nt}} \right], $$
whence

$$ \scal (\omega) = (n-1) n\frac{1}{u_t} - (n-1)(n-2) \frac{u_{tt}}{u_t^2} - 2(n-1) \frac{u_{ttt}}{u_tu_{tt}} + \frac{u_{ttt}^2}{u_{tt}^3} - \frac{u_{tttt}}{u_{tt}^2}. $$
Since positivity of $\omega$ forces $u_t$ and $u_{tt}$ to be positive, one can set $v=\log u_t$ (so that $v_t > 0$) and get

\begin{eqnarray} 
v_te^v\scal (\omega)
&=& (n-1) nv_t - (n-1)(n-2) v_t^2 - 2(n-1) \left(v_{tt}+v_t^2\right) \\
&& + \frac{(v_{tt}+v_t^2)^2}{v_t^2} - \frac{v_{ttt}+3v_tv_{tt}+v_t^3}{v_t}
\end{eqnarray}

%$$ e^v \scal = (n-1) n (1-v') - (2n-1) \frac{v''}{v'} + \frac{(v'')^2}{(v')^3} - \frac{v'''}{(v')^2} $$
or equivalently
\begin{eqnarray} 
\label{scalar}
v_te^v \scal = (n-1) n (1-v_t)v_t - (2n-1) v_{tt} - \left(\frac{v_{tt}}{v_t}\right)_t.
\end{eqnarray}

\begin{ex}
Let $n=2$ and, for all $b \geq 0$ consider
\begin{equation*}
u(t) = bt + e^t \,\, .
\end{equation*}
Then
\begin{equation*}
\omega_b = \sqrt{-1}\partial \bar \partial \left( \frac{|z|^2}{2}  + b\log|z|^2 \right)
\end{equation*}
and thus 
\begin{equation*}
\scal (\omega)
= \frac{2be^t}{(b+e^t)^2} - \frac{3be^t}{(b+e^t)^2} + \frac{be^t}{(b+e^t)^2} = 0 \,\, .
\end{equation*}
In fact $\omega_b$ extends to the Burns-Simanca K\"ahler metric \cite{sim:91} on the blow-up at origin of $\mathbb C^2$.
\end{ex}

\begin{ex}
An important example of a {\em{complete}} scalar flat K\"ahler metric on $\mathbb C^n \setminus\{0\}$ with two ends has been recently discovered by 
Fu-Yau-Zhou in \cite{fu:16}.  Their result is the following

\begin{thm} \label{FYZ}
There exist a one parameter family of functions $t\rightarrow{f_{a}(t)}$ defined on $\br$ and smoothly depending on the parameter $a>0$, such that the metric associated to the K\"ahler form $\omega_{f}$ is complete and scalar flat on $\CC^{n}\setminus{0}.$ Moreover, the function $f_{a}$ has the following expansion as $t\rightarrow{\infty}$
\begin{equation*}
 f_{a}(t)=
    \begin{cases}
    |w|^2-\frac{na^{n-1}}{(n-1)(n-2)}|w|^{4-2n}+\frac{a^n}{n}|w|^{2-2n}+O(|w|^{-2n}) \quad \text{for} \quad n \geq 3\\
    |w|^2+2 a \log|w|^2+\frac{a^2}{2|w|^2}+O(|w|^{-4}) \quad \text{for} \quad n=2,.
\end{cases}
\end{equation*}
In particular, the metric is AE at infinity, and as $t \to -\infty$, we have the following expansion
\begin{equation*}
    f_{a}(t)=a\log|w|^2 -\frac{2a}{n(n-1)}\log(-\log|w|^2)+O(\frac{1}{\log|w|^2})
\end{equation*}
where $t=\log|w|^2.$
\end{thm}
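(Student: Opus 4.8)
The plan is to integrate the scalar-flat equation \eqref{scalar} directly, after rewriting it through the momentum profile of the metric. Keeping the notation of Section 2, let $\tau=u_t$ be the momentum variable and let $\phi(\tau)=u_{tt}$, regarded as a function of $\tau$ via $d\tau/dt=u_{tt}=\phi(\tau)$; positivity of $\omega$ forces $\tau>0$ and $\phi>0$ on the range of $\tau$. Substituting $v_t=\phi/\tau$, $v_{tt}=\phi\phi'/\tau-\phi^{2}/\tau^{2}$, $(v_{tt}/v_t)_t=\phi''\phi-\phi'\phi/\tau+\phi^{2}/\tau^{2}$ into \eqref{scalar} (and noting $v_t e^{v}=\phi$) reduces it to
\[
\scal(\omega)=\frac{n(n-1)}{\tau}-\frac{1}{\tau^{n-1}}\bigl(\tau^{n-1}\phi\bigr)'' .
\]
Hence $\omega$ is scalar flat if and only if $\bigl(\tau^{n-1}\phi-\tau^{n}\bigr)''=0$, i.e.
\[
\tau^{n-1}\phi(\tau)=\tau^{n}+c_{1}\tau+c_{0},\qquad
\phi(\tau)=\tau+c_{1}\tau^{2-n}+c_{0}\tau^{1-n},
\]
for two real constants $c_{0},c_{1}$; this is the full list of $U(n)$-invariant scalar-flat Kähler metrics on an annular domain.

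Next I would select, inside this two-parameter family, the metrics that extend to a \emph{complete} metric on $\CC^{n}\setminus\{0\}$, which corresponds to $t$ sweeping all of $\br$. Near $|w|\to\infty$ one has $\tau\to+\infty$, $\phi(\tau)\sim\tau$, and the radial length $\int^{\tau}ds/\sqrt{\phi(s)}$ diverges as $\tau\to\infty$: the outer end is automatically complete, and expanding $\phi$ shows it is asymptotically Euclidean. For the inner end ($|w|\to 0$, $t\to-\infty$), completeness requires the radial length to diverge as $\tau$ tends to the lower endpoint $a$ of its range. A simple zero of $\phi$ at $a$ gives a convergent integral, hence finite distance (the metric would cap off, or be singular), so one is forced to demand that $\phi$ have a \emph{double} zero at some $a>0$. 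Imposing $\phi(a)=\phi'(a)=0$ on $\tau^{n-1}\phi=\tau^{n}+c_{1}\tau+c_{0}$ pins down $c_{1}=-na^{n-1}$ and $c_{0}=(n-1)a^{n}$, so
\[
\phi(\tau)=\frac{\tau^{n}-na^{n-1}\tau+(n-1)a^{n}}{\tau^{n-1}},
\]
a one-parameter family indexed by $a>0$ (for $n=2$ this is $(\tau-a)^{2}/\tau$). One then checks $\phi>0$ on $(a,\infty)$ — its numerator vanishes at $a$ with derivative $n(\tau^{n-1}-a^{n-1})>0$ there, so it stays positive — defines $t=\int^{\tau}ds/\phi(s)$ and $u=\int^{\tau}s\,ds/\phi(s)$, and verifies that $t$ runs over all of $\br$: the integral diverges as $\tau\to\infty$ as before, and diverges as $\tau\to a^{+}$ precisely because the zero is double. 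Setting $f_{a}:=u$ as a function of $t=\log|w|^{2}$ then gives the desired metric on $\CC^{n}\setminus\{0\}$; it is automatically smooth there, complete at both ends by construction, and depends smoothly on $a$.

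Finally, the two asymptotic expansions come from expanding $\phi$ and integrating. As $\tau\to\infty$, $\phi(\tau)=\tau\bigl(1-na^{n-1}\tau^{1-n}+(n-1)a^{n}\tau^{-n}\bigr)$, so $t=\log\tau+O(\tau^{1-n})$ (a convergent expansion in $\tau^{-1}$); inverting gives $\tau$ as a function of $|w|^{2}=e^{t}$, and $u=\int\tau\,dt$ produces the displayed expansion — the extra logarithm in the $n=2$ case appears because there $c_{1}\tau^{2-n}=c_{1}\tau^{0}$, which integrates to a $\log$. As $\tau\to a^{+}$, write $\tau=a+\varepsilon$; from $\dot\varepsilon=\phi(a+\varepsilon)=\tfrac12\phi''(a)\varepsilon^{2}+O(\varepsilon^{3})$ with $\phi''(a)=n(n-1)/a$ one integrates to $\varepsilon=-\dfrac{2a}{n(n-1)\,t}\bigl(1+o(1)\bigr)$, whence $u=at-\dfrac{2a}{n(n-1)}\log(-t)+O(1/t)$, which is the claimed behaviour as $t\to-\infty$.

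The step I expect to be the main obstacle is the completeness discussion at the inner end: one must recognize that completeness there is \emph{equivalent} to a double zero of the momentum profile (nothing weaker works), and then confirm that this double-zero solution genuinely yields a complete metric on $\CC^{n}\setminus\{0\}$ with two ends — in particular that the distance to the puncture is infinite, which is exactly what a double rather than simple zero ensures, rather than the metric extending smoothly across $|w|=0$ as in the Burns-Simanca case. The rest — inverting $t(\tau)$ to the required order and carrying the two expansions, with the $n=2$ and $n\ge 3$ cases kept separate — is routine bookkeeping.
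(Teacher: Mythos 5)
Your argument is essentially correct, but note that the paper does not prove Theorem \ref{FYZ} at all: it is imported verbatim from Fu--Yau--Zhou \cite{fu:16}, so there is no internal proof to compare against. What you have written is a self-contained derivation via the Calabi/Hwang--Singer momentum construction. Your reduction of \eqref{scalar} to $\scal(\omega)=n(n-1)\tau^{-1}-\tau^{1-n}\bigl(\tau^{n-1}\phi\bigr)''$ checks out against the paper's fourth-order expression for $\scal$; the double-zero normalization $c_1=-na^{n-1}$, $c_0=(n-1)a^n$ is right (and $P'=n(\tau^{n-1}-a^{n-1})$ shows $a$ is the unique positive zero, so positivity on $(a,\infty)$ is immediate); and both asymptotic expansions follow as you indicate, with $\phi''(a)=n(n-1)/a$ producing the $\log(-t)$ term. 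It is worth observing that your construction is exactly the coordinate-free shadow of the paper's own phase-plane picture: since $v_t=\phi/\tau$ and $v_{tt}/v_t=\phi'-\phi/\tau$, one finds $(x,y)\to(n-1,-n)$ as $\tau\to a^{+}$, i.e.\ your double-zero trajectory is precisely the black arc of the level set $F=\lambda_{FYZ}=n^n/(n-1)^{n-1}$ limiting to the tangency point with the line of admissible metrics, which is how Section 3(3) of the paper identifies the Fu--Yau--Zhou metric without reproving its existence. Two small points to tighten: (i) when ruling out alternatives to the double zero you should also dispose of the case in which $\phi$ stays positive all the way down to $\tau=0$ --- there $\int_0 d\tau/\phi$ converges, so $u_t$ would vanish at a finite value of $t$ and positivity of $\omega$ fails before a second end is reached; and (ii) the normalization $f_a\sim|w|^2$ in the statement means the metric is asymptotic to \emph{twice} the Euclidean one in the paper's conventions, so the free multiplicative constant in $\tau\sim Ce^{t}$ must be fixed accordingly when matching the displayed coefficients.
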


This metric will play a significant r\^ole in the subsequent sections, and we will refer to it as the {\em Fu-Yau-Zhou metric}.
\end{ex}

\section{Scalar-flat $U(n)$-invariant metrics}
In the case of scalar flat metrics, equation (\ref{scalar}) above reduces to a second order ODE on the function $v_t$. Therefore, once we set
\begin{equation}\label{eq::system1}
 \left\{
\begin{array}{ccc}
x = (1-n)v_t-\frac{v_{tt}}{v_t}+n-1\\
y=nv_t+\frac{v_{tt}}{v_t}-n,
\end{array}
\right.
\end{equation}
one has $x+y+1=v_t$ and  is reduced to solve the first order system 
\begin{equation}\label{eq::system}
\left\{
\begin{array}{ccc}
x_t =&-nx(x+y+1) \\ 
y_t =&(1-n)y(x+y+1). 
\end{array}
\right.
\end{equation}
subject to the constraint $x+y+1>0$.
Clearly this is equivalent to find integral curves of the vector field 
\begin{equation*}
V =  -n(1+x+y)x \frac{\partial}{\partial x} + (1-n)(1+x+y)y \frac{\partial}{\partial y}
\end{equation*}
in the half plane above the line $x+y+1-0$.
Once $(x,y)$ is the integral curve of $V$ with initial point $(x_0,y_0)$, defiined on the open $(a,b)\subset \mathbf R$ for some  $a\in [-\infty,0)$, $b \in (0,+\infty]$,
 by
\begin{equation} \label{vinteg}
v(t) = \int_0^t \left(1+x(\tau)+y(\tau)\right)d\tau
\end{equation}
is defined $u_t=e^v$, that is a scalar flat K\"ahler metric on the domain $\{e^a<|z|^2/2<e^b\}\subset \mathbf C^n$.

Note that $V$ is singular at all points of the line $x+y+1=0$. Moreover, $(0,0)$ is an isolated singular point for $V$, and any other point is regular.
Finally, note that the axes $x=0$, and $y=0$ are stable separatices at the singular point $(0,0)$.
We are interested in integral curves of $V$ which lie above the line $x+y=-1$, since each of them corresponds to a scalar flat $U(n)$-symmetric K\"ahler metric on domains in $\mathbf C^n$.

\begin{ex}\label{ex::E}
The metric related to the singular point $(x_0,y_0)=(0,0)$ is related to the Euclidean one. 
This follows by the equation $v_t=1$ which gives $u_t = e^t$, whence $u = e^t$.
Thus $u$ is the potential of the Euclidean metric on $\mathbf C^n$.
\end{ex}

\begin{ex}\label{ex::BS}
Consider the solution of system \eqref{eq::system}  with initial point $(0,y_0)$, $y_0>-1$.
Clearly the first equation yield solution $x = 0$. 
On the other hand, the second equation yield
$ y_t = (1-n)y(1+y) $ whence
\begin{equation*}
\left( \frac{1}{y} - \frac{1}{1+y} \right) dy = (1-n) dt
\end{equation*}
and finally
\begin{equation*}
\log|y| - log|1+y| - \log|y_0| + log|1+y_0| = (1-n)t,
\end{equation*}
\begin{equation*}
\frac{y}{1+y} = \frac{y_0}{1+y_0} e^{(1-n)t},
\end{equation*}
whence, leting $B=-y_0/(1+y_0)$, so that $B>-1$, one has
\begin{equation*}
y = \frac{-Be^{(1-n)t} }{ 1+ Be^{(1-n)t} }
\end{equation*}
Moreover one can calculate the integral curve 
\begin{equation*}
(x,y) = \left(0, \frac{-B }{ B+e^{(n-1)t} }\right)
\end{equation*}
For $B\geq 0$ (i.e. $-1< y_0 \leq 0$)  it exists for all $t \in \mathbf R$.
For $B=y_0=0$ it is the constant curve at point $(0,0)$ corresponding to the Euclidean metric.
For $-1<B<0$ (i.e. $y_0> 0$) the curve exists for $t>log\sqrt[n-1]{-B}$.
It follows that
\begin{equation*}
v
= \int_0^t (1+\frac{-B}{B+e^{(n-1)\tau}})d\tau
= \frac{1}{n-1}\log(B+e^{(n-1)t})
\end{equation*}
which implies that 
\begin{equation*}
u_t = \sqrt[n-1]{B+e^{(n-1)t}},
\end{equation*}
is the derivative of the potential of a scalar flat curvature metric on the domain $|z|^2>2\sqrt[n-1]{-B}$, $\mathbb C^n$, $\mathbb C^n \setminus\{0\}$ according to $-1<B<0$, $B=0$, or $B>0$. 
For $n=2$ and $B>0$ this gives, for an unimportant constant $C\in\mathbb R$, the potential of the Burns-Simanca metric
\begin{equation*}
u= Bt+e^t +C.
\end{equation*}
\end{ex}

\begin{ex}\label{ex::HH}
Consider the solution of system \eqref{eq::system}  with initial point $(x_0,0)$, $x_0>-1$.
Clearly the second equation yield solution $y = 0$. 
On the other hand, the first equation yield
$ x_t = -n(1+x) $ whence
\begin{equation*}
\left( \frac{1}{x} - \frac{1}{1+x} \right) dx = -n dt
\end{equation*}

and finally
\begin{equation*}
\log(x) - log|1+x |- \log(x_0) + log|1+x_0| = -nt,
\end{equation*}

whence, leting $B=-x_0/(1+x_0)$, so that $B>-1$, one has
\begin{equation*}
x = \frac{-Be^{-nt} }{ 1+ Be^{-nt} }
\end{equation*}
Moreover one can calculate the integral curve 
\begin{equation*}
(x,y) = \left(\frac{-B }{ B+e^{nt} },0\right)
\end{equation*}

For $B\geq 0$ (i.e. $-1< x_0 \leq 0$)  it exists for all $t \in \mathbf R$.
For $B=x_0=0$ it is the constant curve at point $(0,0)$ relating to the Euclidean metric.
For $-1<B<0$ (i.e. $x_0> 0$) the curve exists for $t>log\sqrt[n]{-B}$.

It follows that
\begin{equation*}
v
= \int_0^t (1+\frac{-B}{B+e^{n\tau}})d\tau
= \frac{1}{n}\log(B+e^{nt})
\end{equation*}
which implies that
\begin{equation*}
u_t =\sqrt[n]{B+e^{nt}},
\end{equation*}
is the derivative of the potential of a scalar flat curvature metric on the domain $|z|^2>\sqrt[n]{-B}$, $\mathbb C^n$, $\mathbb C^n \setminus\{0\}$ according to $-1<B<0$, $B=0$, or $B>0$. 
Thanks to Example \ref {ex::ricci_flat}, for $n=2$ this gives the potential of a Ricci flat metric. Specifically, it gives half the potential of the Eguchi-Hanson metric when $B=1/4$,  
\begin{equation*}
u= \frac{t}{2} -\frac{1}{4} \log(\sqrt{1+4e^{2t}}+1)+\frac{1}{2}\sqrt{1+4e^{2t}}.
\end{equation*}
\end{ex}

The above examples can be summarized in the following
\begin{prop} \label{solutionaxis}
Let $n \geq2$ and let $k \in \{n-1,n\}$. For any constant $A>0$, $B>-1$ let
\begin{equation*}
u_t(t) = A \sqrt[k]{B+e^{kt}}.
\end{equation*}
Letting $\omega = \sqrt{-1} \partial \bar \partial u(t)$ defines a K\"ahler metric on the domain $|z|^2 > \sqrt[k]{-B}$, $\mathbf C^n$, or $|z|^2>0$ according to $-1<B<0$, $B=0$, or $B>0$ respectively. Moreover, $\omega$ is scalar-flat for all $k,A,B$, it is Ricci-flat whenever $k=n$, and it is flat if $B=0$.  
\end{prop}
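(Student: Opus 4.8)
The plan is to verify each assertion by direct substitution into the formulas of Section~2, after first checking that $\omega$ is positive definite. Set $v=\log u_t = \log A + \tfrac1k\log\big(B+e^{kt}\big)$, which is defined precisely on the open set $\{\,t : B+e^{kt}>0\,\}$, and record there the elementary identities
$$v_t=\frac{e^{kt}}{B+e^{kt}},\quad 1-v_t=\frac{B}{B+e^{kt}},\quad v_{tt}=\frac{kB\,e^{kt}}{(B+e^{kt})^2},\quad \Big(\frac{v_{tt}}{v_t}\Big)_t=-\,\frac{k^2B\,e^{kt}}{(B+e^{kt})^2}.$$
Since $u_t=e^v>0$ and $u_{tt}=v_t e^v=A\,e^{kt}(B+e^{kt})^{1/k-1}>0$ on this set, the form \eqref{omegaspherical} is positive definite there (its eigenvalues being $u_t/|z|^2$ with multiplicity $n-1$ and $u_{tt}/|z|^2$, exactly as in the computation leading to \eqref{volume}); rewriting the condition $B+e^{kt}>0$ via $t=\log(|z|^2/2)$ then yields the three domains in the statement according to the sign of $B$.

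For scalar-flatness I would plug these identities into \eqref{scalar}. Each of the three terms on the right carries the common factor $B\,e^{kt}/(B+e^{kt})^2$, and collecting coefficients gives
$$v_t e^v\,\scal(\omega)=\frac{B\,e^{kt}}{(B+e^{kt})^2}\Big(k^2-(2n-1)k+n(n-1)\Big)=\frac{B\,e^{kt}}{(B+e^{kt})^2}\,(k-n)\big(k-(n-1)\big).$$
This is the heart of the matter: the quadratic in $k$ factors as $(k-n)(k-n+1)$, so it vanishes precisely when $k\in\{n-1,n\}$, and hence $\scal(\omega)=0$ for all admissible $A,B$ in both cases. Equivalently, one can observe that $x\equiv0$ and $y\equiv0$ solve system~\eqref{eq::system}, so that $u_t=\sqrt[n-1]{B+e^{(n-1)t}}$ and $u_t=\sqrt[n]{B+e^{nt}}$ of Examples~\ref{ex::BS} and~\ref{ex::HH} are exactly these separatrix solutions, the extra constant $A$ being the harmless rescaling $u\mapsto Au$, which leaves $\scal=0$.

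For the Ricci statement I would compute $u_t^{n-1}u_{tt}=A^n e^{kt}(B+e^{kt})^{(n-k)/k}$, hence $u_t^{n-1}u_{tt}/e^{nt}=A^n e^{(k-n)t}(B+e^{kt})^{(n-k)/k}$. When $k=n$ this reduces to the constant $A^n$, so by \eqref{Ricci} $\Ric(\omega)=-\sqrt{-1}\,\partial\bar\partial\log A^n=0$; for $k=n-1$ the expression is non-constant unless $B=0$, consistent with the metric being scalar-flat but not Ricci-flat in general. Finally, if $B=0$ then $u_t=Ae^t$, so $u=Ae^t$ up to an additive constant and $\omega=A\,\omega_E$ is flat. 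I do not expect a genuine obstacle here: the argument is a short computation, with the only delicate points being the bookkeeping of the common factor in \eqref{scalar} (whose vanishing is explained by the factorization above) and the correct translation of the positivity region into a domain of $\CC^n$.
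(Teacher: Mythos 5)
Your proof is correct, and all the identities check out: $v_t=e^{kt}/(B+e^{kt})$, $v_{tt}=kBe^{kt}/(B+e^{kt})^2$, $(v_{tt}/v_t)_t=-k^2Be^{kt}/(B+e^{kt})^2$, and substitution into \eqref{scalar} does produce the common factor $Be^{kt}/(B+e^{kt})^2$ times $k^2-(2n-1)k+n(n-1)=(k-n)(k-n+1)$. The organization, however, is the reverse of the paper's. The paper does not verify scalar-flatness by substitution at all: it obtains $u_t=\sqrt[k]{B+e^{kt}}$ for $k=n-1$ and $k=n$ as the two separatrix solutions of the phase-plane system \eqref{eq::system} (Examples \ref{ex::BS} and \ref{ex::HH}, initial points on the axes $x=0$ and $y=0$), so scalar-flatness is automatic because the system was derived as the scalar-flat equation; the Proposition is then stated as a summary of those examples. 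Your direct check buys something the phase-plane derivation hides: the factorization $(k-n)(k-n+1)$ shows in one line that $k\in\{n-1,n\}$ are exactly the exponents for which this ansatz is scalar-flat, rather than obtaining the two values separately as the two coordinate axes. You also verify Ricci-flatness for $k=n$ in all dimensions via $u_t^{n-1}u_{tt}/e^{nt}\equiv A^n$ and \eqref{Ricci}, whereas the paper only checks this for $n=2$ by matching with the Eguchi--Hanson potential of Example \ref{ex::ricci_flat}; your computation is the cleaner justification of the claim as stated. One cosmetic point: with the normalization $t=\log(|z|^2/2)$ the positivity region $B+e^{kt}>0$ is literally $|z|^2>2\sqrt[k]{-B}$, not $|z|^2>\sqrt[k]{-B}$; the paper itself is inconsistent about this factor of $2$ (compare Examples \ref{ex::BS} and \ref{ex::HH}), so this is a shared normalization slip rather than a gap in your argument.
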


By trivial calculations one finds that the metrics given in the Proposition above are explicitly given by
\begin{equation*}
u_t = A \sqrt[k]{B+e^{kt}},
%\qquad u_{tt} = A \frac{ke^{kt}}{k\left(B+e^{kt}\right)^{1-1/k}},
\qquad u_{tt} = A \frac{e^{kt}\sqrt[k]{B+e^{kt}}}{B+e^{kt}} 
\end{equation*}
\begin{align*}
\omega 
&= A \sqrt[k]{B+e^{kt}} \left( \sqrt{-1} \partial \bar \partial t
+ \frac{e^{kt}}{B+e^{kt}} \sqrt{-1} \partial t \wedge \bar \partial t \right) \\
%&= A \sqrt[k]{B+|z|^{2k}} \left( \sqrt{-1} \frac{\partial \bar \partial |z|^2}{|z|^2}
%+ \left(\frac{|z|^{2k-4}}{B+|z|^{2k}} -|z|^{-4} \right)\sqrt{-1} \partial |z|^2\wedge \bar \partial |z|^2 \right) \\
%&= A \sqrt[k]{B+|z|^{2k}} \left( |z|^{-2} \sqrt{-1} \partial \bar \partial |z|^2
%- \left(\frac{B|z|^{-4}}{B+|z|^{2k}} \right)\sqrt{-1} \partial |z|^2\wedge \bar \partial |z|^2 \right) \\
& = A \sqrt[k]{1 + B |z|^{-2k}} \left( \sqrt{-1} \partial \bar \partial |z|^2
- \frac{B|z|^{-2k-2}}{1 + B |z|^{-2k}} \sqrt{-1} \partial |z|^2 \wedge \bar \partial |z|^2 \right),
\end{align*}

wherever they are defined.
%By trivial calculations one finds that the metric of Proposition above is explicitly given by
%\begin{equation*}
%f'(s) = A \left( 1 + B s^{-k} \right)^{1/k},
%\qquad f''(s) = - ABs^{-k-1} \left( 1 + B s^{-k} \right)^{1/k-1}
%\end{equation*}
%\begin{equation*}
%\omega = A \sqrt[k]{1 + B |z|^{-2k}} \left( \sqrt{-1} \partial \bar \partial |z|^2
%- \frac{B|z|^{-2k-2}}{1 + B |z|^{-2k}} \sqrt{-1} \partial |z|^2 \wedge \bar \partial |z|^2 \right),
%\end{equation*}
%wherever it is defined.
On the other hand, since $u_t$ is real analytic, for $|z|^2 > \sqrt[k]{|B|}$ any K\"ahler potential of $\omega$ has the expansion
\begin{align*}
%f'(s) =& A \sum_{q \geq 0} \binom{1/k}{q} B^q s^{-qk} \\
%f(s) =& C + A \sum_{q \geq 0} \binom{1/k}{q} B^q \frac{s^{1-qk}}{1-qk} \\
%=& C + A \left\{
%s
%- \frac{B}{k(k-1)} s^{1-k}
%+ \frac{B^2(k-1)}{k^2(2k-1)} s^{1-2k}
%+ O(s^{1-3k})
%\right\} \\
%f(|z|^2) =& C + A \sum_{q \geq 0} \binom{1/k}{q} B^q \frac{|z|^{2(1-qk)}}{1-qk} \\
%=& C + A \left\{
%|z|^2
%- \frac{B}{k(k-1)} |z|^{2-2k}
%+ \frac{B^2(k-1)}{k^2(2k-1)} |z|^{2-4k}
%+ O(|z|^{2-6k})
%\right\}, \\
%u_t =& A \sqrt[k]{B+e^{kt}} \\
%=& A e^t\sqrt[k]{Be^{-kt}+1} \\
%=& A e^t \sum_{q \geq 0} \binom{1/k}{q} B^qe^{-qkt} \\
u(t) =& C + A \sum_{q \geq 0} \binom{1/k}{q} B^q \frac{e^{(1-qk)t}}{1-qk} \\
=& C + A \left\{
e^t
- \frac{B}{k(k-1)} e^{(1-k)t}
+ \frac{B^2(k-1)}{k^2(2k-1)} e^{(1-2k)t}
+ O(e^{(1-3k)t})
\right\},
\end{align*}
for some un-important constant $C$.
This proves that $\omega$ is, up to a scaling factor, an ALE K\"ahler metric for all $k,A,B$.

In order to investigate geometric properties of $\omega$ near the boundary of its domain, we distinguish two cases, according to the sign of $B$ (leaving apart the trivial case $B=0$, corresponding to the euclidean metric, up to a scaling factor). 

\subsection{Metrics with $B>0$} \label{inco}
While $\omega$ is incomplete on $\mathbf C^n \setminus \{0\}$, it induces a (rescaled) ALE K\"ahler metric on the quotient by the cyclic group $\Gamma_k=\mathbf Z/k \mathbf Z$ of the blow-up of $\mathbf C^n$ at the origin.
To see this, let $Bl_0 \mathbf C^n = \{(z,w) \in \mathbf C^n \times \mathbf {CP}^{n-1} \,|\, z_iw_j-z_jw_i=0 \mbox{ for all }1 \leq i<j \leq n\}$ be the blow-up of $\mathbf C^n$ at the origin.
As well known, to any affine open set $w_i \neq 0$ of $\mathbf {CP}^{n-1}$ corresponds an open set of $Bl_0 \mathbf C^n$, which is biholomorphic to $\mathbf C^n$ via the map 
$$(\lambda,\zeta) \mapsto \left((\lambda\zeta_1, \dots, \lambda\zeta_{i-1},\lambda, \lambda\zeta_i, \dots ,\lambda\zeta_{n-1}), (\zeta_1, \dots, \zeta_{i-1},1, \zeta_i, \dots ,\zeta_{n-1})\right)$$ 
(note that here $\lambda \in \mathbf C$ and $\zeta \in \mathbf C^{n-1}$).
In these coordinates one has
%\begin{align*}
%\omega =& A \left( B + |z|^{2k} \right)^{1/k} \left( \frac{\sqrt{-1} \partial \bar \partial |z|^2}{|z|^2}
%- \frac{B}{B + |z|^{2k}} \frac{\sqrt{-1} \partial |z|^2 \wedge \bar \partial |z|^2}{|z|^4} \right) \\
%=& A \left( B + |z|^{2k} \right)^{1/k} \left( \sqrt{-1} \partial \bar \partial \log |z|^2
%+ \frac{|z|^{2k}}{B + |z|^{2k}} \sqrt{-1} \partial \log |z|^2 \wedge \bar \partial \log |z|^2 \right) \\
%=&A \left( B + |\lambda|^{2k}(1+|\zeta|^2)^k \right)^{1/k} \left( \sqrt{-1} \partial \bar \partial \log|\lambda|^2 + \sqrt{-1} \partial \bar \partial \log(1+|\zeta|^2) \vphantom{ \frac{|\lambda|^{2k}(1+|\zeta|^2)^k}{B+|\lambda|^{2k}(1+|\zeta|^2)^k}} \right.\\
%& \left. + \frac{|\lambda|^{2k}(1+|\zeta|^2)^k}{B+|\lambda|^{2k}(1+|\zeta|^2)^k} \sqrt{-1} \partial \log\left(|\lambda|^2(1+|\zeta|^2)\right) \wedge \bar \partial \log\left(|\lambda|^2(1+|\zeta|^2)\right) \right) \\
%=&A \left( B + |\lambda|^{2k}(1+|\zeta|^2)^k \right)^{1/k} \left( \sqrt{-1} \partial \bar \partial \log(1+|\zeta|^2) \vphantom{ \frac{|\lambda|^{2k}(1+|\zeta|^2)^k}{B+|\lambda|^{2k}(1+|\zeta|^2)^k}} \right.\\
%& \left. + \frac{|\lambda|^{2k-2}(1+|\zeta|^2)^k}{B+|\lambda|^{2k}(1+|\zeta|^2)^k} \sqrt{-1} \left( d\lambda + \lambda \partial \log(1+|\zeta|^2)\right) \wedge \left( d\bar\lambda + \bar\lambda \bar \partial \log(1+|\zeta|^2)\right) \right)
%\end{align*}
\begin{multline*}
\omega = A \sqrt[k]{ B + |\lambda|^{2k}(1+|\zeta|^2)^k } \left( \sqrt{-1} \partial \bar \partial \log(1+|\zeta|^2) \vphantom{ \frac{|\lambda|^{2k}(1+|\zeta|^2)^k}{B+|\lambda|^{2k}(1+|\zeta|^2)^k}} \right.\\
\left. + \frac{|\lambda|^{2k}(1+|\zeta|^2)^k}{B+|\lambda|^{2k}(1+|\zeta|^2)^k} \sqrt{-1} \partial \log\left(|\lambda|^2(1+|\zeta|^2)\right) \wedge \bar \partial \log\left(|\lambda|^2(1+|\zeta|^2)\right) \right).
\end{multline*}
Since $k$ is a positive integer and 
\begin{multline*}
|\lambda|^{2k} \sqrt{-1} \partial \log\left(|\lambda|^2(1+|\zeta|^2)\right) \wedge \bar \partial \log\left(|\lambda|^2(1+|\zeta|^2)\right) = \\ |\lambda|^{2k-2} \sqrt{-1} \left( d\lambda + \lambda \partial \log(1+|\zeta|^2)\right) \wedge \left( d\bar\lambda + \bar\lambda \bar \partial \log(1+|\zeta|^2)\right),
\end{multline*}
$\omega$ is smooth, but it is degenerate at points $\lambda=0$ in the direction of $\lambda$ as soon as $k \geq 2$.
This is the reason why one has to take the quotient by the cyclic group $\Gamma_k$ in order to get a K\"ahler metric on $M^n_k = \Gamma_k \backslash Bl_0 \mathbf C^n$.
Note that the action of $\Gamma_k$ on $Bl_0 \mathbf C^n$ is defined by $\gamma \cdot (z,w) = (e^\frac{2\pi i \gamma}{k}z,w)$.
The quotient map from $Bl_0 \mathbf C^n$ to $M^n_k$ maps $(\lambda,\zeta)$ to $(\lambda^k,\zeta)$.
Therefore, in local coordinates $(\mu,\zeta)$ on $M^n_k$ one has
\begin{multline*}
\omega' = A \sqrt[k]{ B + |\mu|^2(1+|\zeta|^2)^k } \left( \sqrt{-1} \partial \bar \partial \log(1+|\zeta|^2) \vphantom{ \frac{|\lambda|^{2k}(1+|\zeta|^2)^k}{B+|\lambda|^{2k}(1+|\zeta|^2)^k}} \right.\\
\left. + \frac{k^{-2} |\mu|^2(1+|\zeta|^2)^k}{B+|\mu|^2(1+|\zeta|^2)^k} \sqrt{-1} \partial \log\left(|\mu|^2(1+|\zeta|^2)^k\right) \wedge \bar \partial \log\left(|\mu|^2(1+|\zeta|^2)^k\right) \right),
\end{multline*}
whence it follows that $\omega'$ is a smooth scalar-flat K\"ahler metric on $M^n_k$ for $k \in \{n-1,n\}$ and it is Ricci-flat if $k=n$.
For $n=2$, it is, up to scalar factors, the Burns-Simanca metric on $Bl_0\mathbf C^2$ when $k=1$ and the Eguchi-Hanson metric on $\Gamma_2 \backslash Bl_0\mathbf C^2$ when $k=2$.

\subsection{Metrics with $-1<B<0$}
Even in this case, $\omega$ is an incomplete K\"ahler metric on the domain $|z|^2 > \sqrt[k]{-B}$, and it degenerates when $|z|^2$ approaches $\sqrt[k]{-B}$.
To see this, note that $\omega$ can be written in the form
\begin{equation*}
\omega = A \sqrt[k]{1 + B |z|^{-2k}} \left( \sqrt{-1} \partial \bar \partial |z|^2
- \frac{4B}{|z|^{2k} + B} \sqrt{-1} \partial |z| \wedge \bar \partial |z| \right).
\end{equation*}
Moreover, recall that $\sqrt{-1} \partial \bar \partial |z|^2$ is the K\"ahler form of twice the Euclidean metric $g_E = dr^2 + r^2g_{S^{2n-1}}$, being $r=|z|$, and that $\sqrt{-1} \partial |z| \wedge \bar \partial |z| = \frac{1}{2} dr \wedge d^cr$, being $d^c=-dr \circ J$, and $J$ the standard complex structure on $\mathbf C^n$.
As a consequence, the metric associated with $\omega$ is given by
\begin{align*}
g 
=& 2A \left( 1 + B r^{-2k} \right)^{1/k} \left( dr^2 + r^2g_{S^{2n-1}}
- \frac{B}{r^{2k} + B} (dr^2+d^cr^2) \right) \\
=& 2Ar^{-2} \sqrt[k]{ r^{2k} + B } \left( dr^2 + r^2g_{S^{2n-1}}
- \frac{B}{r^{2k} + B} (dr^2+d^cr^2) \right) \\
=& 2Ar^{-2} \sqrt[k]{ r^{2k} + B } \left( dr^2 + r^2g_{S^{2n-1}} \right)
- \frac{2ABr^{-2}}{\left(r^{2k} + B\right)^\frac{k-1}{k}} (dr^2+d^cr^2) \\
=& \frac{2A r^{-2}}{\left( r^{2k} + B \right)^\frac{k-1}{k}} \left( r^{2k}dr^2 + (r^{2k} + B)r^2g_{S^{2n-1}}
- B d^cr^2 \right)
\end{align*}
\begin{equation*}
g = \frac{2A r^{-2}}{\left( r^{2k} + B \right)^\frac{k-1}{k}} \left( r^{2k}dr^2 + (r^{2k} + B)r^2g_{S^{2n-1}}
- B d^cr^2 \right).
\end{equation*}
Consider now a path $\rho: [0,1) \to \mathbf C^n$ defined by $\rho(t) = (2-t)a$, for some fixed $a$ belonging to the sphere of radius $\sqrt[2k]{-B}$ in $\mathbf C^n$.
%\begin{align*}
%\frac{dF(\rho(t))}{dt}
%=& \sum_{j=1}^n -a_j\frac{\partial F}{\partial z_j}(\rho(t)) - \bar a_j\frac{\partial F}{\partial \bar z_j}(\rho(t)) \\
%=& \left. -\sqrt[2k]{|B|} \sum_{j=1}^n \frac{z_j}{|z|}\frac{\partial F}{\partial z_j} + \frac{\bar z_j}{|z|}\frac{\partial F}{\partial \bar z_j} \right|_{z=\rho(t)} \\
%\dot \rho(t) =& -\sqrt[2k]{|B|} \left.\frac{\partial }{\partial r}\right|_{z=\rho(t)} \\
%\|\dot\rho(t)\|^2 =& \frac{2A (2-t)^2\sqrt[k]{|B|} }{\left( (2-t)^{2k}|B| - |B| \right)^\frac{k-1}{k}} (2-t)^{2k}|B| \sqrt[k]{|B|} \\
%=& \frac{2A (2-t)^{2(k+1)}|B|^\frac{k+2}{k} }{\left( (2-t)^{2k} - 1 \right)^\frac{k-1}{k}|B|^\frac{k-1}{k}} \\
%=& \frac{2A (2-t)^{2(k+1)}|B|^\frac{3}{k}}{\left( (2-t)^{2k} - 1 \right)^\frac{k-1}{k}} \\
%\|\dot\rho(t)\| 
%=& \frac{\sqrt{2A} (2-t)^{k+1}|B|^\frac{3}{2k}}{\left( (2-t)^{2k} - 1 \right)^\frac{k-1}{2k}} \\
%=& \frac{\sqrt{2A} (2-t)^{k+1}|B|^\frac{3}{2k}}{\left( \sum_{j=1}^{2k} \binom{2k}{j}(1-t)^j \right)^\frac{k-1}{2k}} \\
%=& \frac{\sqrt{2A} (2-t)^{k+1}|B|^\frac{3}{2k}}{(1-t)^\frac{k-1}{2k} \left( \sum_{j=0}^{2k-1} \binom{2k}{j+1}(1-t)^j \right)^\frac{k-1}{2k}} \\
%\sim& \frac{\sqrt{2A} |B|^\frac{3}{2k}}{(1-t)^\frac{k-1}{2k} \left( 2k \right)^\frac{k-1}{2k}} \quad\mbox{as}\quad t \to 1^-
%\end{align*}
As one can readily check, the norm of $\dot \rho(t)$ is asymptotic to $\sqrt{2A} (-B)^\frac{3}{2k}(2k)^\frac{1-k}{2k}(1-t)^\frac{1-k}{2k}$ as $t \to 1$, whence it follows that the length of $\rho$ is finite.
As anticipated, this proves that the metric $g$ is incomplete.
Now consider the circle $\gamma: [0,2\pi] \to \mathbf C^n$ defined by $\gamma(t) = e^{\sqrt{-1}t}a$, where $\ell>0$ and $a\in \mathbf C^n$ satisfies $|a|^2 > \sqrt[k]{-B}$.
%\begin{align*}
%\frac{dF(\gamma(t))}{dt}
%=& \sum_{j=1}^n \sqrt{-1}e^{\sqrt{-1}t}a_j\frac{\partial F}{\partial z_j}(\gamma(t)) - \sqrt{-1}e^{-\sqrt{-1}t} \bar a_j\frac{\partial F}{\partial \bar z_j}(\gamma(t)) \\
%=& \left. - |a| \sum_{j=1}^n \sqrt{-1}\frac{z_j}{|z|}\frac{\partial F}{\partial z_j} - \sqrt{-1} \frac{\bar z_j}{|z|}\frac{\partial F}{\partial \bar z_j} \right|_{z=\gamma(t)} \\
%\dot \gamma(t) 
%=& - |a| \left.J\frac{\partial }{\partial r}\right|_{z=\gamma(t)} \\
%\|\dot\gamma(t)\|^2
%=& \frac{2A |a|^2}{\left( |a|^{2k} - |B| \right)^\frac{k-1}{k}} \left( (|a|^{2k} - |B|)|a|^4 \|J\partial_r\|_{S^{2n-1}}^2
%+ |B||a|^2 \right) \\
%=& \frac{2A |a|^4}{\left( |a|^{2k} - |B| \right)^\frac{k-1}{k}} \left( (|a|^{2k} - |B|)|a|^2 \|J\partial_r\|_{S^{2n-1}}^2 + |B| \right) \\
%\|\dot\gamma(t)\|
%=& \frac{\sqrt{2A} |a|^2}{\left( |a|^{2k} - |B| \right)^\frac{k-1}{2k}} \sqrt{ (|a|^{2k} - |B|)|a|^2 \|J\partial_r\|_{S^{2n-1}}^2 + |B| } \\
%\sim& \frac{\sqrt{2A} |B|^\frac{3}{2k}}{(|a|^2-\sqrt[k]{|B|})^\frac{k-1}{2k} k^\frac{k-1}{2k}} \quad\mbox{as}\quad |a|^2 \to \sqrt[k]{|B|}
%\end{align*}
Due to $U(n)$-invariance of $g$, the norm of $\dot \gamma$ turns out to be constant, and it is asymptotic to $\sqrt{2A} (-B)^\frac{3}{2k}k^\frac{1-k}{2k} (|a|^2-\sqrt[k]{-B})^\frac{1-k}{2k}$ as $|a|^2 \to \sqrt[k]{-B}$.
As a consequence, whenever $k \neq 1$, the boundary of the domain $|z|^2 < \sqrt[k]{-B}$ collapses to a circle of infinite length.

\subsection{Level sets}

Solutions of the system \eqref{eq::system} lie in the level curves of the function 
\begin{equation}
F(x,y)= |x|^{1-n}|y|^n.
\end{equation}
In fact, by equations on the system one has
$$ \frac{dy}{dx} = \frac{(n-1)y}{nx}
\qquad \frac{ndy}{y} = \frac{(n-1)dx}{x}$$

$$ 
\quad n(\log|y|-\log|y_0|) = (n-1)(\log|x|-\log|x_0|) $$
\begin{equation}
 |y|=|y_0||x_0|^{1/n-1}|x|^{1-1/n}
 \end{equation} 
 
 In other words, letting $\lambda=|x_0|^{1-n}|y_0|^n \in [0,\infty]$, the level curves of $F$ are the coordinate lines $x=0$ ($\lambda=\infty$), $y=0$ ($\lambda=0$), and graph of the functions $\phi(x)=\sgn(y_0) \lambda^{1/n}|x|^{1-1/n}$ with $\lambda \neq \infty$, $\psi(y) = \sgn(x_0) \lambda^{1/(1-n)} |y|^{n/(n-1)}$ with $\lambda \neq 0$.
Clearly level curves of $F$ have symmetry with respect to the two coordinate lines $xy=0$.
The solutions of \eqref{eq::system} are implicitely given by

\begin{equation} \label{timespanx}
%\frac{dx}{x(1+x+\sgn(y_0) \lambda^{1/n}|x|^{1-1/n})} = -ndt
%\qquad
\int_{x_0}^{x(t)} \frac{dx}{-nx(1+x+\sgn(y_0) \lambda^{1/n}|x|^{1-1/n})}  =  t
\end{equation}

or equivalently 

\begin{equation} \label{timespany}
%\frac{dy}{y(1+y+\sgn(x_0) \lambda^{1/(1-n)}|y|^{n/(n-1)})} = (1-n)dt
%\qquad
\int_{y_0}^{y(t)} \frac{dy}{-(n-1)y(1+y+\sgn(x_0) \lambda^{1/(1-n)}|y|^{n/(n-1)})}  =  t\, .
\end{equation}

The scalar flat metrics relative to solutions of the system \eqref{eq::system} behave according to initial point $(x_0,y_0) \ \in \mathbf R^2$  such that $x_0+y_0+1>0$ and consequently by $\lambda \in [0,\infty]$. 
Let $\lambda_{FYZ}= n^n/(n-1)^{n-1}$. Such value is of critical importance in our analysis as it it the value for which the level set 
$\{F=\lambda\}$ is tangent to the line $\{1+x+y =0\}$ which is the discriminant to decide whether (a piece of) a level set does correspond to a riemannian metric, and will be hence called {\em line of admissible metrics}.

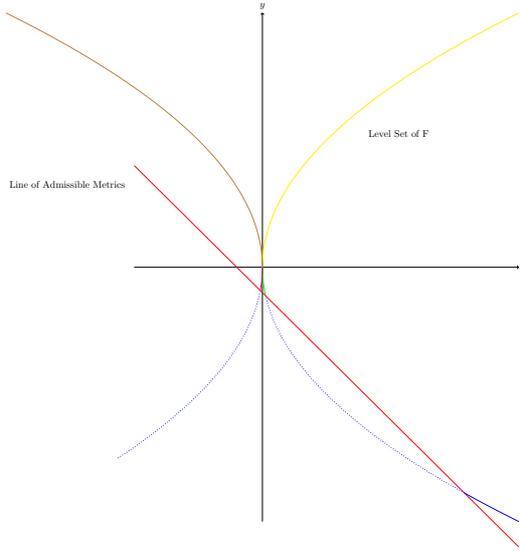
\begin{figure}
\centering
\resizebox{7cm}{!}{
\begin{tikzpicture}
  \draw[->] (-5,0) -- (10,0) node[right] {$x$};
  \draw[->] (0,-10) -- (0,10) node[above] {$y$};

  \draw[red,thick] (-5,4) -- (10,-11);
    \node at (-10, 3) [above right] {Line of Admissible Metrics};

  \draw[blue,thick,domain=-10:-8.87,smooth,variable=\x] plot ({0.1*\x*\x}, {\x});
  \draw[blue,thick,domain=-8.87:-1.13,smooth,variable=\x,dotted] plot ({0.1*\x*\x}, {\x});
  \draw[green,thick,domain=-1.1:0,smooth,variable=\x] plot ({0.1*\x*\x}, {\x});
   \draw[magenta,thick,domain=-0.93:0,smooth,variable=\x] plot ({-0.1*\x*\x}, {\x});
    \draw[blue,thick,domain=-7.5:-1,smooth,variable=\x,dotted] plot ({-0.1*\x*\x}, {\x});
   \draw[yellow,thick,domain=0:10,smooth,variable=\x] plot ({0.1*\x*\x}, {\x});
    \draw[brown,thick,domain=-10:0,smooth,variable=\x] plot ({-0.1*\x*\x}, {-\x});
  \node at (4, 5) [above right] {Level Set of F};

\end{tikzpicture}
}
\caption{Level set of $F$ divided into the pieces corresponding to different scalar flat metrics. The dotted part below the line of admissible metrics does not define a metric. This level set defines (up to scaling) $5$ different scalar flat metrics, all ALE except for the blue one.} \label{fig:M1}
\end{figure}

\begin{enumerate}
\bigskip

\item $x_0y_0=0$ 

These cases are covered by the examples above.
($x_0=0$ like Burns-Simanca, $y_0=0$ like Eguchi-Hanson)

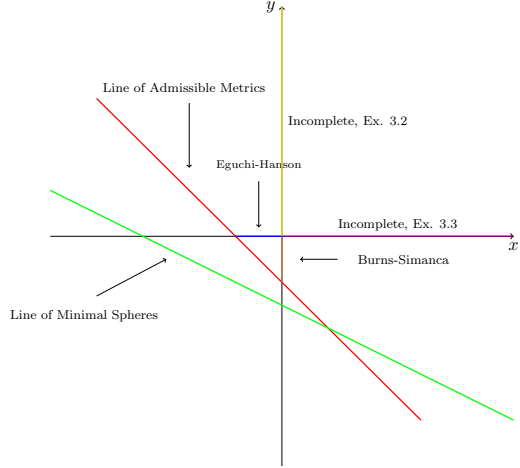
\begin{figure}
\centering
\resizebox{7cm}{!}{
\begin{tikzpicture}
  \draw[->] (-5,0) -- (5,0) node[below] {$x$};
  \draw[->] (0,-5) -- (0,5) node[left] {$y$};

  \draw[red,thick] (3,-4) -- (-4,3);
     \node at (-4, 3) [above right] {{\scriptsize Line of Admissible Metrics}};

  \draw[green,thick] (-5,1) -- (5,-4);
    \node at (-6, -2) [above right] {{\scriptsize Line of Minimal Spheres}};
    
      \draw[brown,thick] (0,0) -- (0,-1);
    \node at (1.5,-0.5) [right] {{\scriptsize Burns-Simanca}};
    
        \draw[blue,thick] (0,0) -- (-1,0);
    \node at (-0.5,1.3) [above] {{\tiny Eguchi-Hanson}};
    
          \draw[yellow,thick] (0,0) -- (0,5);
    \node at (0,2.5) [right] {{\scriptsize Incomplete, Ex. 3.2}};
    
         \draw[violet,thick] (0,0) -- (5,0);
    \node at (2.5,0) [above] {{\scriptsize Incomplete, Ex. 3.3}};

    \draw[->][very thin] (-0.5,1.2) -- (-0.5,0.2);
\draw[->][very thin] (1.2,-0.5) -- (0.4,-0.5);
\draw[->][very thin] (-2,2.9) -- (-2,1.5);
\draw[->][very thin] (-4,-1.3) -- (-2.5,-0.5);

\end{tikzpicture}
}
\caption{Positioning in the $(x,y)$-plane of the Examples of Proposition \ref{solutionaxis}} \label{fig:M2}
\end{figure}

\bigskip

\item\label{item::2} $x_0>0$, $\lambda< \lambda_{MY}$, or $x_0,y_0>0$

The curve of the vector field $V$ run along the graph of $\phi$. Since $V$ is asymptotic to $V_0 = -nx\frac{\partial}{\partial x} +(1-n)y\frac{\partial}{\partial x}$ around the stable note $(0,0)$. therefore $\lim_{t \to +\infty} (x,y) = (0,0)$. More specifically, 
\begin{equation*}
(x,y) \sim (x_0e^{-nt},y_0e^{(1-n)t}) \quad \mbox{as} \quad t \to +\infty.
\end{equation*} 
 
Since $v_t =1 + x + y$, by \eqref{eq::system} one has
\begin{align*}
v = \int_0^t \left(1+x(\tau)+y(\tau)\right) d\tau & = \int_{x_0}^{x(t)} \frac{dx}{-nx} = \int_{y_0}^{y(t)} \frac{dy}{(1-n)y} \\
& =-\frac{1}{n}\log|x| + \frac{1}{n} \log|x_0| = -\frac{1}{n-1}\log|y| + \frac{1}{n-1} \log|y_0|
\end{align*}
whence $ u_t = \sqrt[n]{\left|\frac{x_0}{x}\right|} = \sqrt[n-1]{\left|\frac{y_0}{y}\right|} \sim e^t \quad \mbox{as} \quad t \to +\infty$.
On the other hand, note that integrals defining implicitly the curve $(x,y)$ by (\ref{timespanx}) and (\ref{timespany}) converge when $x,y \to +\infty$. Therefore there is $T>0$ depending on on the initial point $(x_0,y_0)$ such that the curve (x,y) is defined on $(-T,+\infty) \subset \mathbf R$.
It  follows that the K\"ahler metric is defined on the domain $\{z \in \mathbf C^n \mbox{ s.t. }|z|^2 > 2e^{-T}\}$ and is asymptotically Euclidean. For later use let us notice that our original coordinates $z_i$ are indeed euclidean coordinates at infinity for these metrics on the ALE end.
\bigskip

\item $x_0>0$, $\lambda = \lambda_{FYZ}$.

It is easy to check that in this case the level set $\{F(x,y) = \lambda_{FYZ}\}$ is tangent to the line $a+x+y=0$ at the point
$(n-1,-n)$. The level set  in this case gives rise to $5$ riemannian metrics. In Picture (3) below we drew the $3$ metrics corresponding to $x>0$, while we omitted the two arcs joining the origin to the admissible line at points with $y<0$. 
While these and the yellow arc in the picture do not appear to have any new special properties, we can recognize the black arc as the one corresponding to the Fu-Yau-Zhou family of metrics: indeed as for such family we know that, independently of the parameter  $a$ in Theorem \ref{FYZ}, it is asymptotically euclidean for $t\rightarrow +\infty$, while for $t\rightarrow -\infty$, 
$v_t$ is asymptotic to $-\frac{2}{n(n-1)t^2}(1-\frac{2}{n(n-1)t})$, from which it is immediate to derive that $(x(t),y(t)) \rightarrow (n-1, -n)$.
The Fu-Yau-Zhou metric has therefore to connect the same two points in the $(x,y)$-plane as the black arc, hence it must be the 
metric corresponding to it.

The metric corresponding to the blue arc does not seem to have appeared in the previous studies. Again by looking at formulae 
(\ref{timespanx}) and (\ref{timespany}), it is immediate to see that the corresponding riemannian metric is defined on the complement of a 
ball of radius $0< R_{\lambda_{FYZ}} < \infty$ and has a similar asymptotic expansion towards infinity as the Fu-Yau-Zhu metric.
It is curious to observe that it contains a unique minimal sphere (the intersection with the green line in Figure 3, as  explained in the next Section), which is unstable.

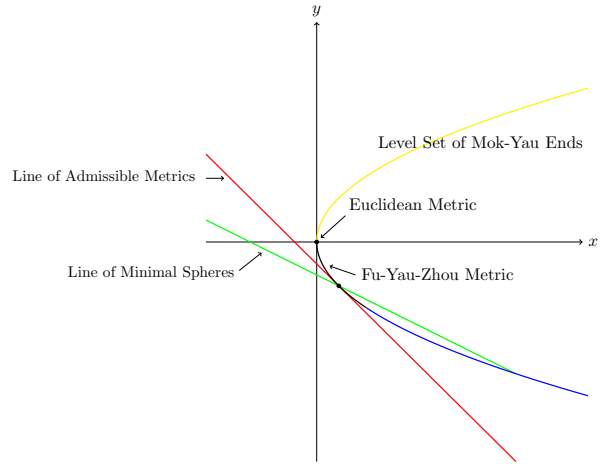
\begin{figure}
\centering

\resizebox{8cm}{!}{

\begin{tikzpicture}[scale=0.5]

\draw[->] (-5,0) -- (12,0) node[right] {$x$};
\draw[->] (0,-10) -- (0,10) node[above] {$y$};

\draw[red,thick] (-5,4) -- (9,-10);
\draw[green,thick] (-5,1) -- (9,-6);
\draw[yellow,thick,domain=0:7,smooth,variable=\x] plot ({0.25*\x*\x}, {\x});
\draw[black,thick,domain=-3:0,smooth,variable=\x] plot ({0.25*\x*\x}, {\x});
\draw[blue,thick,domain=-7:-3,smooth,variable=\x] plot ({0.25*\x*\x}, {\x});

\fill (0,0) circle (3pt);
\fill (1,-2) circle (3pt);

\node at (-14, 2.5) [above right] {\small{Line of Admissible Metrics}};
\node at (-11.5, -2) [above right] {\small{Line of Minimal Spheres}};
\node at (2.5,4) [above right] {Level Set of Mok-Yau Ends};
\node at (1.2,1.2) [above right] {Euclidean Metric};
\node at (1.75,-2) [above right] {Fu-Yau-Zhou Metric};

\draw[->][very thin] (1.3,1.2) -- (0.2,0.2);
\draw[->][very thin] (1.75,-1.5) -- (0.6,-1.1);
\draw[->][very thin] (-5,2.9) -- (-4.2,2.9);
\draw[->][very thin] (-3.5,-1.3) -- (-2.5,-0.5);
  
\end{tikzpicture}
}
\caption{Part  with $x>0$ of the level set $F=\lambda_{FYZ}$ divided into the different pieces corresponding to different metrics. } \label{fig:M3}
\end{figure}

\begin{figure}
\centering

\resizebox{8cm}{!}{

\begin{tikzpicture}[scale=0.5]

\draw (-5,0) -- (1.5,0);
\draw (2.5,0) -- (6.5,0);
\draw[->] (8.5,0) -- (12,0) node[right] {$x$};
\draw (0,-10) -- (0,1.5);
\draw (0,2.5) -- (0,4);
\draw[->] (0,6) -- (0,10) node[above] {$y$};

\draw[red,thick] (-5,4) -- (9,-10);
\draw[green,thick] (-5,1) -- (9,-6);
\draw[yellow,thick,domain=0:6.5,smooth,variable=\x] plot ({0.25*\x*\x}, {\x});
\draw[yellow,thick,domain=6.78:7,smooth,variable=\x] plot ({0.25*\x*\x}, {\x});
\draw[black,thick,domain=-3:0,smooth,variable=\x] plot ({0.25*\x*\x}, {\x});
\draw[blue,thick,domain=-6.48:-3,smooth,variable=\x] plot ({0.25*\x*\x}, {\x});
\draw[blue,thick,domain=-7:-6.78,smooth,variable=\x] plot ({0.25*\x*\x}, {\x});

\fill (0,0) circle (3pt);
\fill (1,-2) circle (3pt);

\node at (-14, 2.5) [above right] {\small{Line of Admissible Metrics}};
\node at (-11.5, -2) [above right] {\small{Line of Minimal Spheres}};
\node at (2.5,4) [above right] {Level Set of Mok-Yau Ends};
\node at (1.2,1.2) [above right] {Euclidean Metric};
\node at (1.75,-2) [above right] {Fu-Yau-Zhou Metric};

\draw[->][very thin] (1.3,1.2) -- (0.2,0.2);
\draw[->][very thin] (1.75,-1.5) -- (0.6,-1.1);
\draw[->][very thin] (-5,2.9) -- (-4.2,2.9);
\draw[->][very thin] (-3.5,-1.3) -- (-2.5,-0.5);

\draw (0,2) circle [very thin, radius=0.5];
\draw (2,0) circle [very thin, radius=0.5];
\draw (0,5) circle [very thin, radius=1];
\draw (7.5,0) circle [very thin, radius=1];
\draw (11,6.65) circle [very thin, radius=0.5];
\draw (11,-6.65) circle [very thin, radius=0.5];
\draw (8,-7) circle [very thin, radius=0.75];

\node at (0,2) {$1$};
\node at (2,0) {$1$};
\node at (7.5,0) {$2$};
\node at (11,6.7) {$3$};
\node at (11,-6.7) {$3$};
\node at (8,-7) {$4$};
\node at (0,5) {$5$};

%zona 5
\draw [very thin, dashed, brown] (-6,6.5) -- (10,6.5);
\draw [very thin, dashed, brown] (-6,6) -- (8.5,6);
\draw [very thin, dashed, brown] (-5.5,5.5) -- (-1,5.5);
\draw [very thin, dashed, brown] (1,5.5) -- (7,5.5);
\draw [very thin, dashed, brown] (-5.2,5) -- (-1,5);
\draw [very thin, dashed, brown] (1.2,5) -- (5.6,5);
\draw [very thin, dashed, brown] (-4.8,4.5) -- (-1,4.5);
\draw [very thin, dashed, brown] (1,4.5) -- (2.5,4.5);
\draw [very thin, dashed, brown] (-4.4,4) -- (3.7,4);
\draw [very thin, dashed, brown] (-4,3.5) -- (2.5,3.5);
\draw [very thin, dashed, brown] (-3.5,3) -- (2,3);
\draw [very thin, dashed, brown] (-3,2.5) -- (1.3,2.5);
\draw [very thin, dashed, brown] (-2.5,2) -- (-0.5,2);
\draw [very thin, dashed, brown] (0.5,2) -- (0.75,2);
\draw [very thin, dashed, brown] (-2,1.5) -- (0.4,1.5);
\draw [very thin, dashed, brown] (-1.5,1) -- (0.2,1);
\draw [very thin, dashed, brown] (-1,0.5) -- (0,0.5);
\draw [very thin, dashed, brown] (-0.75,0.2) -- (0,0.2);
\draw [very thin, dashed, brown] (-0.5,-0.2) -- (0,-0.2);
\draw [very thin, dashed, brown] (-0.3,-0.5) -- (0,-0.5);

%zona 4

\draw [very thin, dashed, blue] (2.25,-3) -- (2.25,-3.15);
\draw [very thin, dashed, blue] (2.5,-3.2) -- (2.5,-3.5);
\draw [very thin, dashed, blue] (2.75,-3.4) -- (2.75,-3.7);
\draw [very thin, dashed, blue] (3,-3.6) -- (3,-3.9);
\draw [very thin, dashed, blue] (3.25,-3.63) -- (3.25,-4.15);
\draw [very thin, dashed, blue] (3.5,-3.85) -- (3.5,-4.4);
\draw [very thin, dashed, blue] (3.75,-3.87) -- (3.75,-4.65);
\draw [very thin, dashed, blue] (4,-4) -- (4,-4.9);
\draw [very thin, dashed, blue] (4.25,-4.22) -- (4.25,-5.15);
\draw [very thin, dashed, blue] (4.5,-4.4) -- (4.5,-5.4);
\draw [very thin, dashed, blue] (4.75,-4.6) -- (4.75,-5.65);
\draw [very thin, dashed, blue] (5,-4.8) -- (5,-5.9);
\draw [very thin, dashed, blue] (5.25,-4.8) -- (5.25,-6.15);
\draw [very thin, dashed, blue] (5.5,-5.) -- (5.5,-6.4);
\draw [very thin, dashed, blue] (5.75,-5) -- (5.75,-6.65);
\draw [very thin, dashed, blue] (6,-5) -- (6,-6.9);
\draw [very thin, dashed, blue] (6.25,-5.2) -- (6.25,-7.15);
\draw [very thin, dashed, blue] (6.5,-5.2) -- (6.5,-7.4);
\draw [very thin, dashed, blue] (6.75,-5.2) -- (6.75,-7.65);
\draw [very thin, dashed, blue] (7,-5.4) -- (7,-7.9);
\draw [very thin, dashed, blue] (7.25,-5.5) -- (7.25,-8.22);
\draw [very thin, dashed, blue] (7.5,-5.6) -- (7.5,-6.47);
\draw [very thin, dashed, blue] (7.5,-7.7) -- (7.5,-8.47);
\draw [very thin, dashed, blue] (7.75,-5.62) -- (7.75,-6.3);
\draw [very thin, dashed, blue] (7.75,-7.8) -- (7.75,-8.75);
\draw [very thin, dashed, blue] (8,-5.74) -- (8,-6);
\draw [very thin, dashed, blue] (8,-7.8) -- (8,-9);
\draw [very thin, dashed, blue] (8.25,-5.86) -- (8.25,-6.2);
\draw [very thin, dashed, blue] (8.25,-7.8) -- (8.25,-9.15);
\draw [very thin, dashed, blue] (8.5,-5.98) -- (8.5,-6.4);
\draw [very thin, dashed, blue] (8.5,-7.62) -- (8.5,-9.40);
\draw [very thin, dashed, blue] (8.75,-5.9) -- (8.75,-9.75);
\draw [very thin, dashed, blue] (9,-6.02) -- (9,-10);

%zona 2
\draw [very thin, dashed, red] (9,6) -- (12,5.5);
\draw [very thin, dashed, red] (7,5.3) -- (9.5,4.85);
\draw [very thin, dashed, red] (11.5,4.45) -- (12,4.35);
\draw [very thin, dashed, red] (7.5,4.1) -- (12,3.3);
\draw [very thin, dashed, red] (3.7,3.8) -- (12,2.2);
\draw [very thin, dashed, red] (2.5,3) -- (12,1.1);
\draw [very thin, dashed, red] (1.4,2.2) -- (2.4,2);
\draw [very thin, dashed, red] (5.1,1.3) -- (7,0.95);
\draw [very thin, dashed, red] (8.2,0.78) -- (12,0);
\draw [very thin, dashed, red] (0.5,1.2) -- (6.2,0.05);
\draw [very thin, dashed, red] (8.5,-0.44) -- (12,-1.2);
\draw [very thin, dashed, red] (2.4,-0.4) -- (6.2,-1.18);
\draw [very thin, dashed, red] (8.5,-1.7) -- (12,-2.4);
\draw [very thin, dashed, red] (3.4,-1.9) -- (12,-3.7);
\draw [very thin, dashed, red] (2.4,-2.85) -- (12,-5);
\draw [very thin, dashed, red] (5,-4.45) -- (12,-6.2);

\end{tikzpicture}

}

\caption{Subdivision of the region of admissible metrics into the sub-regions described in this Section } \label{fig:M3}

\end{figure}
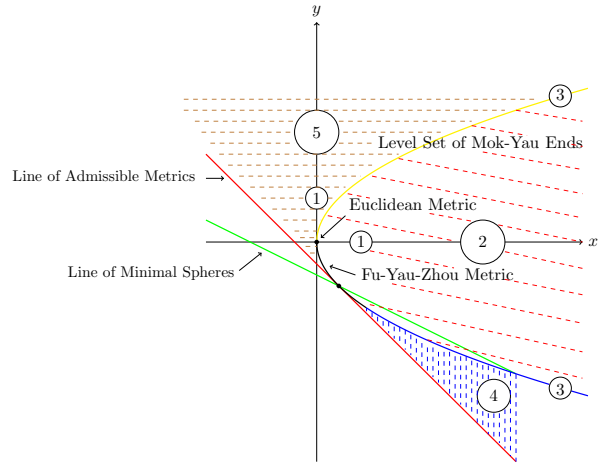

\bigskip

\item $x_0>n-1$, $\lambda \in (\lambda_{MY},+\infty)$

The curve of the vector field $\Phi_*V$ run along the graph of $\phi$. Since the graph cross the line $1+x+y=0$ at point $(w, -1-w)$ where $n-1<w<x_0$ is the solution of the equation $(1+w)^n=\lambda w^{n-1}$.
Note that integrals defining implicitly the curve $(x,y)$ by (\ref{timespanx}) and (\ref{timespany}) diverge  to $-\infty$ when $(x,y) \to (w,-1-w)$.
On the other hand, the curve $(x,y)$ converge when $x,-y \to +\infty$. Therefore there is $T_\lambda>0$ such that the curve (x,y) is defined on $(-\infty,-T_\lambda) \subset \mathbf R$.
\bigskip

\item $x<0,-1<y$ or $0<x<n-$1, $\lambda>\lambda_{MY}$

The curve of the vector field $\Phi_*V$ run along the graph of $\phi$.
Arguing as for item \eqref{item::2}, one has
\begin{equation*}
(x,y) \sim (x_0e^{-nt},y_0e^{(1-n)t}) \quad \mbox{as} \quad t \to +\infty.
\end{equation*} 
 and $ u_t = \sqrt[n]{\left|\frac{x_0}{x}\right|} = \sqrt[n-1]{\left|\frac{y_0}{y}\right|} \sim e^t \quad \mbox{as} \quad t \to +\infty$.
On the other hand, note that integrals defining implicitly the curve $(x,y)$ by (\ref{timespanx}) and (\ref{timespany}) converge when $x,y \to +\infty$. 
Since the graph cross the line $1+x+y=0$ at point $(w, -1-w)$ where $w>x_0$ if $x_0>0$, and $w<x_0$ if $x_0<0$  is the solution of the equation $1+w+\sgn(y_0)\lambda^{1/n} {w}^{1-1/n}=0$.
Note that integrals defining implicitly the curve $(x,y)$ by (\ref{timespanx}) and (\ref{timespany}) diverge  to $-\infty$ when $(x,y) \to (w,-1-w)$.

Therefore the curve (x,y) is defined on  $\mathbf R$.
It  follows that the K\"ahler metric is defined on $ \mathbf C^n\setminus\{0\}$ and is asymptotically (twice) Euclidean.
For later use let us notice that our original coordinates $z_i$ are indeed euclidean coordinates at infinity for these metrics on the ALE end.
\end{enumerate}

\section{Minimal spheres}

We now want to detect which ALE scalar flat metrics do contain a minimal $2n-1$-sphere among the loci $S^{2n-1}(r) = \partial B_n(r)=\{z \in \mathbb C^n \,|\, |z| = r\}$. In order to determinate the area of these spheres with respect of $\omega$ consider the Euler vector field
\begin{equation*}
V=\sum_{k=1}^n z_k \frac{\partial}{\partial z_k} + \bar z_k \frac{\partial}{\partial \bar z_k},
\qquad JV = \sum_{k=1}^n \sqrt{-1} z_k \frac{\partial}{\partial z_k} - \sqrt{-1}  \bar z_k \frac{\partial}{\partial \bar z_k}
\end{equation*}
so that one calculates that squared of the norm is given by
\begin{equation}\label{V}
|V|_E^2 = |z|^2=2e^t,
\qquad |V|_{\omega}^2 = 2|z|^2(f'+|z|^2f'') = 2u_{tt}.
\end{equation}
Let $\xi_E$ be the volume form induced on $\partial B_{2n}(r)$ by the Euclidean metric.
The volume form induced by the metric $\omega$ is the following
\begin{equation}\label{volform}
\xi_\omega 
= i_V \omega^n/(n!|V|_\omega) 
= 2^{n-1/2}(f')^{n-1}\sqrt{ f'+ |z|^2f''} \xi_E \\
%= \frac{2^{n-1/2}u_t^{n-1}}{2^{n-1}e^{(n-1)t}} \frac{\sqrt{u_{tt}}} {2^{1/2}e^{t/2}} \xi_E
= \frac{u_t^{n-1}\sqrt{u_{tt}}}{e^{(n-1/2)t}} \xi_E
\end{equation}  

Given that $V$ generates the flow $\theta \colon \mathbb R \times \mathbb C^n \rightarrow \mathbb C^n$ given by $\theta (\tau,  z) = e^{\tau} z$, 
it is an easy standard exercise to check that

$$V=2\partial _t = r \partial _r \, .$$

Since the Euclidean area of the sphere $S^{2n-1}(r)$ is $\frac{2\pi^nr^{2n-1}}{{n-1}!}$ one has

\begin{equation}\label{area}
\area_\omega(S^{2n-1}(r) ) 
= \frac{(2\pi)^nr^{2n-1}}{{n-1}!} \left. (f')^{n-1}\sqrt{ 2f'+ 2|z|^2f''}\right|_{|z|^2=r^2}
= \frac{(2\pi)^n}{(n-1)!} \left. u_t^{n-1}\sqrt{2u_{tt}} \right|_{t=\log(r^2/2)}.
\end{equation}

The volume of the ball $B_{2n}(r)=\{z \in \mathbb C^n \,|\, |z| \leq r\}$, with respect of $\omega$ is given by
\begin{equation*}
\vol_\omega(B_{2n}(r))=\int_{|z| \leq r} \frac{\omega^n}{n!}
=  \int_{-\infty}^{\log(r^2/2)} \area_\omega(\partial(B_{2n}(e^{t/2})) \sqrt{\frac{u_{tt}}{2}}dt
= \frac{(2\pi)^n}{n!} \left. u_t^n\right|^{t=\log(r^2/2)}_{t=-\infty}
\end{equation*}

Once we have the  area of the family of hyperspheres $S^{2n-1}(r)$, we want to understand the change in its area function under in the normal direction. By the first variation formula for the area functional we know that
\begin{equation}\label{firstvar}
    \left.\frac{d }{dt}\area_\omega(S^{2n-1}(t))\right|_{t=t_0}= - (2n-1)\int_{S^{2n-1}(t_0)} H(t_0) g(\frac{V}{2},\frac{V}{|V|_{\omega}}) \xi_\omega \,,
\end{equation}
where $H$ is the mean curvature of $S^{2n-1}(t_0)$. Since both the metric $\omega$ and the spheres are $U(n)$-invariant, we know that the mean curvature w.r.t. $\omega$ must be a function of the $t$-variable only, hence, by (\ref{volform}) and (\ref{V}), we have 
\begin{equation*}
       \left.\frac{d }{dt}\area_\omega(S^{2n-1}(t))\right|_{t=t_0}=  -(2n-1) H(t_0) \frac{\sqrt{2u_{tt}}}{2} \frac{(2\pi)^n}{(n-1)!}u_t^{n-1}\sqrt{2u_{tt}}
\end{equation*}
 
We can then prove the following 
\begin{prop}\label{thm :meancurvature}
\begin{itemize}

\item
The mean curvature of $S^{2n-1}(t)$ is given by
\begin{equation}\label{eq:of:mc:in:t}
  H(t)= \frac{-1}{(2n-1)\sqrt{2} u_{t} (u_{tt})^\frac{3}{2}}{ (2(n-1) u^2_{tt}+ u_{ttt}u_{t})}\,,  
\end{equation}

\item
A minimal $S^{2n-1}(t_0)$ is stable if and only if 
\begin{equation}\label{stable1}
\left.\left[u^2_{t}u_{tttt}-2(n-1)(4n-3)u^3_{tt}\right]\right._{t=t_{0}} \geq 0\,.
\end{equation}
\end{itemize}
\end{prop}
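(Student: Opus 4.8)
The plan is to read off both statements from the first variation identity displayed immediately above the Proposition together with the closed formula \eqref{area} for the area. Throughout set
\[ A(t):=\area_\omega\bigl(S^{2n-1}(t)\bigr)=\frac{(2\pi)^n}{(n-1)!}\,u_t^{\,n-1}\sqrt{2u_{tt}}, \]
and recall that positivity of $\omega$ forces $u_t>0$ and $u_{tt}>0$ on the interval where the metric is defined. Differentiating $A$ in $t$ gives
\[ A'(t)=\frac{(2\pi)^n}{(n-1)!}\,\frac{u_t^{\,n-2}}{\sqrt{2u_{tt}}}\Bigl(2(n-1)u_{tt}^2+u_t u_{ttt}\Bigr), \]
while the identity established just before the statement reads
\[ A'(t_0)=-(2n-1)\,H(t_0)\,\frac{\sqrt{2u_{tt}}}{2}\,\frac{(2\pi)^n}{(n-1)!}\,u_t^{\,n-1}\sqrt{2u_{tt}}=-(2n-1)\,H(t_0)\,\frac{(2\pi)^n}{(n-1)!}\,u_t^{\,n-1}u_{tt}. \]
Equating the two expressions and solving for $H(t_0)$, using $u_t,u_{tt}>0$, yields \eqref{eq:of:mc:in:t}; this is a one–line computation.

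For the stability statement I would first reduce it to a scalar inequality. The Jacobi operator of $\Sigma:=S^{2n-1}(t_0)$ is $L=\Delta_\Sigma+|\mathrm{II}|^2+\Ric(\nu,\nu)$, whose potential is constant on $\Sigma$ and whose Laplacian commutes with the transitive $U(n)$-action; since the bottom eigenfunction of $L$ is positive and unique up to scaling, it is $U(n)$-invariant, hence constant. Therefore $\Sigma$ is stable if and only if the index form $Q$ is non-negative on constants, i.e. $Q(1,1)=-\int_\Sigma\bigl(|\mathrm{II}|^2+\Ric(\nu,\nu)\bigr)\xi_\omega\ge0$. Now $V/2=\partial_t$ is $g$-orthogonal to every sphere $S^{2n-1}(t)$ (a short computation using $g(X,Y)=\omega(X,JY)$ and \eqref{|z|^2}), so applying the second variation formula for a minimal hypersurface to the family $\{S^{2n-1}(t)\}$ — in which the tangential correction terms are absent and the acceleration term carries the factor $H(t_0)=0$ — gives $A''(t_0)=Q(\phi,\phi)$ with $\phi=g(\partial_t,\nu)=|V|_\omega/2=\sqrt{u_{tt}(t_0)/2}$ constant on $\Sigma$. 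Hence $A''(t_0)=\tfrac12 u_{tt}(t_0)\,Q(1,1)$, and $\Sigma$ is stable if and only if $A''(t_0)\ge0$.

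It then remains to compute $A''(t_0)$. Put $P(t):=2(n-1)u_{tt}^2+u_tu_{ttt}$, so that minimality of $S^{2n-1}(t_0)$ reads $P(t_0)=0$, i.e. $u_{ttt}(t_0)=-2(n-1)u_{tt}^2/u_t$. Since $A'=\frac{(2\pi)^n}{(n-1)!}\frac{u_t^{\,n-2}}{\sqrt{2u_{tt}}}P$ and $P(t_0)=0$, differentiating once more leaves only the term hitting $P$:
\[ A''(t_0)=\frac{(2\pi)^n}{(n-1)!}\,\frac{u_t^{\,n-2}}{\sqrt{2u_{tt}}}\,P'(t_0),\qquad P'(t)=(4n-3)u_{tt}u_{ttt}+u_tu_{tttt}. \]
Substituting $u_{ttt}(t_0)=-2(n-1)u_{tt}^2/u_t$ gives $P'(t_0)=\bigl(u_t^2u_{tttt}-2(n-1)(4n-3)u_{tt}^3\bigr)/u_t$ at $t_0$, whence
\[ A''(t_0)=\frac{(2\pi)^n}{(n-1)!}\,\frac{u_t^{\,n-3}}{\sqrt{2u_{tt}}}\Bigl(u_t^2u_{tttt}-2(n-1)(4n-3)u_{tt}^3\Bigr)\Big|_{t=t_0}. \]
Since the prefactor is strictly positive, $A''(t_0)\ge0$ is equivalent to \eqref{stable1}, which closes the argument.

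The differentiations are routine; the real content, and the step I expect to be the main obstacle, is the reduction in the second paragraph. One must (i) invoke uniqueness of the ground state to upgrade ``index form non-negative on $U(n)$-invariant variations'' to genuine stability against all normal variations, and (ii) justify that the second variation of area along the \emph{non-equidistant} radial foliation $\{S^{2n-1}(t)\}$ still detects the sign of the index form at the minimal leaf — which works precisely because $V/2$ is normal to every leaf, so no tangential terms enter, and the acceleration contribution $\int_\Sigma H\,g(\nabla_{\partial_t}\partial_t,\nu)\,\xi_\omega$ vanishes at a minimal $\Sigma$.
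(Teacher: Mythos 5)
Your proposal is correct, and for the first bullet it is exactly the paper's argument: equate the derivative of the explicit area formula \eqref{area} with the first variation identity displayed just before the Proposition and solve for $H$. For the second bullet the overall skeleton also coincides with the paper's (reduce stability to $A''(t_0)\ge 0$, then compute $A''(t_0)$ using $P(t_0)=0$ to kill all terms except the one hitting $P'$; your algebra reproduces \eqref{stable1} correctly). The one place where you genuinely diverge is the justification of the reduction itself. The paper disposes of it in one line by citing Palais' principle of symmetric criticality: inequality \eqref{stable1} is stability \emph{within the family} $S^{2n-1}(r)$, and symmetric criticality is invoked to upgrade this to stability against all variations. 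You instead argue spectrally: since $U(n)$ acts transitively on $S^{2n-1}(t_0)$, the potential $|\mathrm{II}|^2+\Ric(\nu,\nu)$ of the Jacobi operator is constant, so constants realize the bottom of the Rayleigh quotient and non-negativity of the index form on constants is equivalent to stability outright; combined with the observation that $V/2=\partial_t$ is normal to every sphere (so $A''(t_0)=Q(\phi,\phi)$ with $\phi$ the constant $\sqrt{u_{tt}/2}$), this closes the loop. Your route is more self-contained and in fact more careful: symmetric criticality as usually stated is a first-order principle about critical points, and its second-order use (``stable among invariant variations $\Rightarrow$ stable'') is precisely what your ground-state argument supplies — one needs that the lowest eigenspace of the Jacobi operator contains an invariant (here constant) function, which follows from positivity and simplicity of the ground state, or even more directly here from the potential being constant. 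So your proof fills in the step the paper compresses into the citation of \cite{Palais}, at the cost of a slightly longer reduction; both arrive at the same computation and the same inequality.
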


\begin{proof}
The first part is proved in the computation above. As for the second, differentiating twice Formula \ref{area} 
it is immediate to che that stability implies the condition above. Viceversa, inequality (\ref{stable1}) is equivalent to stability {\em within
the family of $S^{2n-1}(r)$}, yet a trivial application of the principle of symmetric criticality (\cite{Palais}) implies stability alltogether.
\end{proof}

\begin{rmk}
As a byproduct of the previous analysis we get immediately that there are no minimal hyperspheres in the family $S^{2n-1}(t)$ in $Bl_0\CC^2$ with the Burns metric and in $Bl_0\CC^2/\Gamma_{2}$
with the Eguchi-Hanson metric. Indeed, by taking the explicit expressions of the potentials in Section $3$,
 it is immediate to check that it never vanishes.

In fact, Trinca (\cite{Trinca} has recently proved that for a large class of hyperk\"ahler $4$-manifolds containing the Eguchi-Hanson space,
there are no compact minimal hypersurfaces alltogether.
\end{rmk}

In the above, we discussed the stability condition for the minimal hyperspheres $S^{2n-1}(t)$ with respect to derivative of potential function of the K\"ahler metric $\omega$. Now we translate the minimality and stability conditions in terms of $x(t))$ and $y(t)$.

\begin{prop}\label{minsp}

Let $\{(x(t),y(t))\}, t\in (a,b)$ be a solution of the system (\ref{eq::system}), hence corresponding to a scalar flat metric and lying in a level set $S_{\lambda} = \{ F(x,y) =\lambda\}$. 
Then   
\begin{itemize}
\item
$S^{2n-1}(t_0)$ is minimal w.r.t. to any metric corresponding to $\{(x(t),y(t))\}$ if and only if 
$$(n-1)x(t_0)+ny(t_0)+2n-1=0 \, .$$
\item
If $S^{2n-1}(t_0)$ is minimal w.r.t. to any metric corresponding to $\{(x(t),y(t))\}$, then it is (weakly) stable if and only if 
$$n-1 < x(t_0) \leq 2n-1 \,\, \mbox{or equivalently} \,\, -n > y(t_0) \geq 1-2n \, .$$

\end{itemize}
\end{prop}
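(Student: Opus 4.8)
The plan is to translate the two conditions of Proposition~\ref{thm :meancurvature}, which are phrased there in terms of the potential $u$, into the $(x,y)$-coordinates using the substitution $v=\log u_t$ together with the relations \eqref{eq::system1}--\eqref{eq::system}. The first step is to record the chain-rule identities coming from $u_t=e^v$:
\[
u_{tt}=v_t\,u_t,\qquad u_{ttt}=(v_{tt}+v_t^2)\,u_t,\qquad u_{tttt}=(v_{ttt}+3v_tv_{tt}+v_t^3)\,u_t.
\]
These reduce all the curvature quantities appearing in Proposition~\ref{thm :meancurvature} to polynomial expressions in $v_t,v_{tt},v_{ttt}$ times a positive power of $u_t$.

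For the minimality statement, positivity of $\omega$ forces $u_t,u_{tt}>0$, so by \eqref{eq:of:mc:in:t} one has $H(t_0)=0$ if and only if $2(n-1)u_{tt}^2+u_{ttt}u_t=0$; substituting the identities above and dividing by $u_t^2$ this becomes $v_{tt}+(2n-1)v_t^2=0$. Now \eqref{eq::system1} gives $v_t=x+y+1$, and differentiating via \eqref{eq::system} gives $v_{tt}=x_t+y_t=(x+y+1)\bigl(-nx-(n-1)y\bigr)$, whence
\[
v_{tt}+(2n-1)v_t^2=(x+y+1)\bigl[(n-1)x+ny+2n-1\bigr].
\]
Since $x+y+1=v_t>0$, minimality at $t_0$ is equivalent to $(n-1)x(t_0)+ny(t_0)+2n-1=0$.

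For the stability statement, the same substitutions turn \eqref{stable1}, after dividing by $u_t^3>0$, into $v_{ttt}+3v_tv_{tt}+\bigl(1-2(n-1)(4n-3)\bigr)v_t^3\ge 0$. At the point $t_0$ I would first eliminate $v_{ttt}$ using the scalar-flat equation \eqref{scalar} with $\scal=0$, which rearranges to $v_{ttt}=n(n-1)(1-v_t)v_t^2-(2n-1)v_tv_{tt}+v_{tt}^2/v_t$, and then use the minimality relation $v_{tt}=-(2n-1)v_t^2$ just obtained. I expect the left-hand side to collapse to $n(n-1)v_t^2(1-v_t)$; carrying out this simplification and checking that all the $n$-dependent coefficients cancel correctly is the single genuinely computational step and the main place an arithmetic slip could occur. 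Granting it, and using $n\ge 2$, $v_t>0$, weak stability at the minimal point is equivalent to $v_t\le 1$, i.e. $x(t_0)+y(t_0)\le 0$.

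It remains to combine this with the minimality constraint. Solving $(n-1)x+ny+2n-1=0$ for $y$ yields $x+y=\bigl(x-(2n-1)\bigr)/n$ and $v_t=x+y+1=\bigl(x-(n-1)\bigr)/n$, so the two conditions $v_t>0$ and $x+y\le 0$ read precisely $n-1<x(t_0)\le 2n-1$. Along the line $(n-1)x+ny+2n-1=0$ the coordinate $y$ is strictly decreasing in $x$, and $x=n-1,\ 2n-1$ correspond respectively to $y=-n,\ 1-2n$, so this is the same as $-n>y(t_0)\ge 1-2n$. Finally, the free scaling constant in the potential (cf.\ Proposition~\ref{solutionaxis}) multiplies $H$ and the quantity in \eqref{stable1} each by a fixed power of a positive number, so neither $H=0$ nor the sign in \eqref{stable1} depends on which metric attached to the curve $\{(x(t),y(t))\}$ is chosen, which justifies the wording of the statement.
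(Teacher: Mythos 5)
Your proof is correct and follows essentially the same route as the paper: both translate the minimality and stability criteria of Proposition \ref{thm :meancurvature} into the $(x,y)$-plane via $v=\log u_t$ and the system \eqref{eq::system}, with the only organizational difference that you eliminate $v_{ttt}$ through the scalar-flat ODE at the minimal point while the paper records a general identity for $u_t^2u_{tttt}-2(n-1)(4n-3)u_{tt}^3$ and then specializes. The one computation you flagged does check out: the coefficient $2(2n-1)^2-3(2n-1)+1-2(n-1)(4n-3)$ vanishes identically, so the stability quantity collapses to $n(n-1)v_t^2(1-v_t)$, in agreement with the paper's identity restricted to $(n-1)x+ny+2n-1=0$.
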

\begin{proof}
The first part follows immediately replacing the definitions of $x$ and $y$ in terms of $u$ in Formula (\ref{eq:of:mc:in:t}).

As for the second, it is a simple lengthy computation to prove that 

\begin{align*}
  u^2_{t}u_{tttt}-2(n-1)(4n-3)u^3_{tt} = & (x+y+1)[((n-1)x+ny+2n-1)^2 - \\
  & - 6(n-1)(x+y+1)((n-1)x+ny+2n-1) + \\
 & + n(n-1)(-x-y)(x+y+1)]\,,
\end{align*}

As $t_0$ corresponds to a minimal surfaces, by the first part we know that $(n-1)x(t_0)+ny(t_0)+2n-1=0$, hence

$$(u^2_{t}u_{tttt}-2(n-1)(4n-3)u^3_{tt})(t_0) \geq 0\,\, \mbox{iff} \,\, (x+y)(t_0) \leq 0 \,\, ,$$

which implies the claim.
\end{proof}

\begin{figure}
\centering

\begin{tikzpicture}
  \draw[->] (-2.5,0) -- (5.5,0) node[right] {$x$};
  \draw[->] (0,-2.5) -- (0,5.5) node[above] {$y$};
  
  \foreach \x in {-2,...,-1,1,...,5}
    \draw (\x,-0.1) -- (\x,0.1) node[below] {$\x$};
  \foreach \y in {-5,...,-1,1,...,2}
    \draw (-0.1,\y) -- (0.1,\y) node[left] {$\y$};
  
  \draw[domain=-3.3:3.3,smooth,variable=\x,blue,thick] plot ({\x*\x/2}, {\x});
  \draw[red,thick] (3,-4) -- (-4,3);
  \draw[green,thick] (-5,1) -- (7,-5);
  \draw[brown,thick,domain=-4:4,smooth,variable=\x] plot ({0.25*\x*\x}, {\x});
  \draw[cyan,thick,domain=-4:4,smooth,variable=\x] plot ({0.3333*\x*\x}, {\x});
  \draw[yellow!50!white,thick,domain=-5:4,smooth,variable=\x] plot ({0.3*\x*\x}, {\x});
  
  % Intersection point
  \fill (1.6, -2.3) circle (2pt);
  \node at (1.6, -2.3) [below left] {$MinSt$};
    \fill (5.9, -4.4) circle (2pt);
  \node at (5.9, -4.4) [above right] {$MinUnst$};
\end{tikzpicture}

\caption{A summary of the previous analysis proving existence of level sets (e.g. the yellow one) corresponding to metrics containing stable and unstable minimal spheres} \label{fig:M4}
\end{figure}
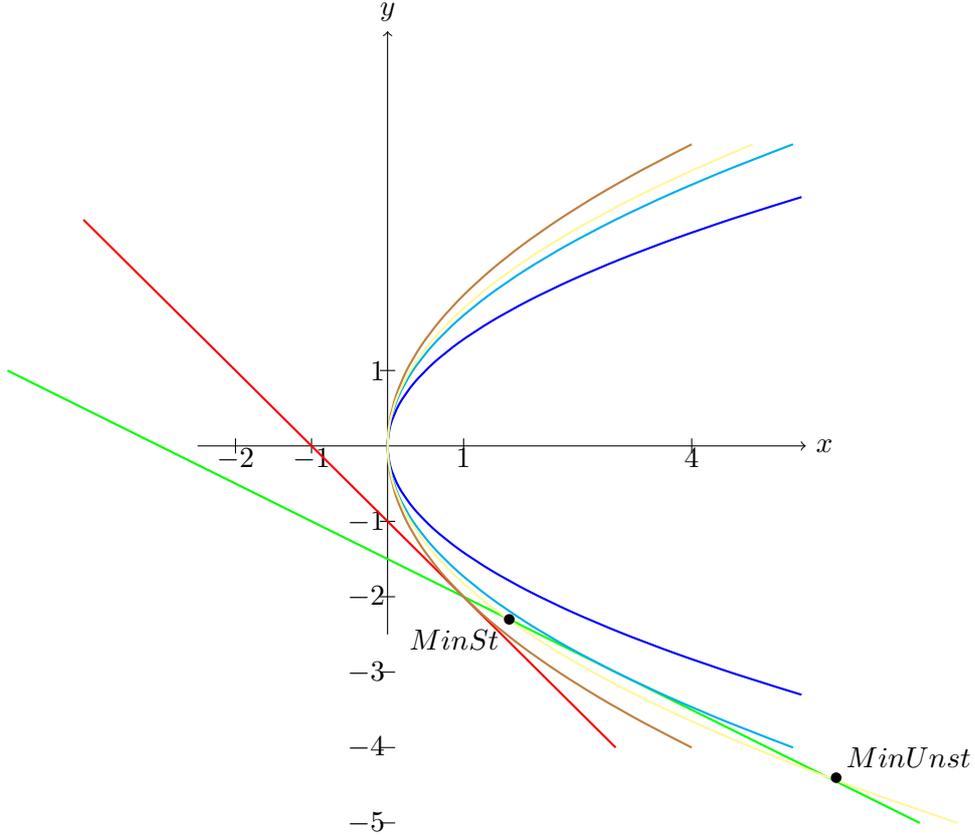

At this point we can easily observe the following

\begin{cor} \label{mainco}
\begin{enumerate}
\item
A complete $U(n)$-invariant scalar flat K\"ahler metric on a manifold without boundary does not contain any compact minimal hypersurface.
\item
There exist manifolds with boundary with complete $U(n)$-invariant scalar flat K\"ahler metrics and an asymptotically euclidean end, which contain a stable minimal hypersphere.
\end{enumerate}
\end{cor}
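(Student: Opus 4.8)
The plan is to reduce both statements to the phase picture of Section~3 and to Proposition~\ref{minsp}. Recall that a $U(n)$-invariant scalar flat K\"ahler metric corresponds to an integral curve $t\mapsto(x(t),y(t))$ of \eqref{eq::system} lying on a level set $S_\lambda=\{F=\lambda\}$ of $F=|x|^{1-n}|y|^n$ and strictly above the line of admissible metrics $\{x+y+1=0\}$; by Proposition~\ref{minsp}, $S^{2n-1}(t_0)$ is minimal precisely when $(x(t_0),y(t_0))$ lies on the line of minimal spheres $L:=\{(n-1)x+ny+2n-1=0\}$, and it is stable precisely when moreover $n-1<x(t_0)\le 2n-1$. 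On $L$ one has $x+y+1=\tfrac{1}{n}(x-(n-1))$, so the portion of $L$ in the admissible region is exactly the ray $\{x>n-1\}$, and $L$ meets the admissible line only at the Fu--Yau--Zhou point $(n-1,-n)$. In particular every minimal $S^{2n-1}(t_0)$ has $x(t_0)>n-1$.

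For part~(1): by the classification of Section~3, a complete $U(n)$-invariant scalar flat K\"ahler metric on a manifold \emph{without boundary} has integral curve either the constant $(0,0)$ (the Euclidean metric), or the axis segment $x\equiv 0$, $y\in(-1,0]$ (Burns--Simanca type, on $M^n_{n-1}$), or $y\equiv 0$, $x\in(-1,0]$ (Eguchi--Hanson type, on $M^n_n$), or the Fu--Yau--Zhou arc on $S_{\lambda_{FYZ}}$ joining $(n-1,-n)$ to $(0,0)$ (together with its companion). In the first three cases $x\le 0$, so $(n-1)x+ny+2n-1\ge n-1>0$; on the Fu--Yau--Zhou arc $x(t)\in(0,n-1)$ for all finite $t$, so by the remark above the curve cannot meet $L$; the companion is handled in the same way. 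Hence for all these metrics the coordinate sphere $S^{2n-1}(t)$ satisfies $H(t)<0$ for every $t$. Now let $\Sigma$ be any compact minimal hypersurface. The function $|z|^2$ restricted to $\Sigma$ attains an interior maximum at some $p$, and $|z(p)|^2>0$ (otherwise $\Sigma$ would lie in $\{|z|^2=0\}$, a complex submanifold of real codimension $\ge 2$). At $p$ the gradient of $|z|^2|_\Sigma$ vanishes, so $T_p\Sigma$ is the tangent space of the coordinate sphere through $p$; since $\Sigma$ is minimal, $\Delta_\Sigma(|z|^2|_\Sigma)(p)$ equals the trace of $\mathrm{Hess}_g|z|^2$ over that tangent space, which is a positive multiple of $-H(t(p))$ and hence strictly positive. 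This contradicts $p$ being a maximum, which proves~(1).

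For part~(2): fix any $\lambda$ with $2n-1<\lambda<\lambda_{FYZ}=n^n/(n-1)^{n-1}$. A direct computation shows that on $L$ the function $x\mapsto x^{1-n}\big(\tfrac{(n-1)x+2n-1}{n}\big)^n$ decreases from $\lambda_{FYZ}$ at $x=n-1$ to $2n-1$ at $x=2n-1$ and then increases to $+\infty$; hence there is a unique $x_0\in(n-1,2n-1)$ where it equals $\lambda$, and the corresponding point $(x_0,y_0)\in L$ lies in the quadrant $\{x>0,\,y<0\}$ and on $S_\lambda$. Since $\lambda<\lambda_{FYZ}=\min_{x>0}x^{1-n}(1+x)^n$, the branch of $S_\lambda$ in that quadrant stays strictly above the admissible line and contains no singular point of the vector field, hence is a single integral curve of \eqref{eq::system}, defined on an interval $(-T,+\infty)$, flowing into $(0,0)$ as $t\to+\infty$ --- so it defines an asymptotically euclidean metric on $\{|z|^2>2e^{-T}\}$ with the $z_i$ euclidean at infinity (as in case~\eqref{item::2} of Section~3) --- and passing through $(x_0,y_0)$ at some $t_0$. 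By Proposition~\ref{minsp}, $S^{2n-1}(t_0)$ is a stable minimal hypersphere for this metric. Choosing any $t_1\in(-T,t_0)$ and restricting the metric to $M:=\{z\in\CC^n:|z|^2\ge 2e^{t_1}\}$ yields a manifold with boundary $S^{2n-1}(\sqrt{2}\,e^{t_1/2})$ carrying a complete (as a manifold with boundary) $U(n)$-invariant scalar flat K\"ahler metric with an asymptotically euclidean end and containing the stable minimal hypersphere $S^{2n-1}(t_0)$ in its interior. This proves~(2).

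The crux is part~(1): one must be sure the list of complete boundaryless $U(n)$-invariant scalar flat K\"ahler metrics extracted from Section~3 is exhaustive --- in particular that the Fu--Yau--Zhou companion, and any candidate complete boundaryless metric on $\CC^n\setminus\{0\}$, really has its curve confined to $\{x\le n-1\}$ (or else fails to be complete, or fails to be boundaryless) --- and one must fix the sign convention so that $H<0$ is precisely the inequality making the maximum-principle step work. Everything else is the translation already carried out in Proposition~\ref{minsp} together with the elementary monotonicity computation above.
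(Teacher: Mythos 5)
Your proof is correct and follows essentially the same route as the paper: part (1) combines the maximum principle with the observation that the admissible portion of the minimal-sphere line $(n-1)x+ny+2n-1=0$ lies in $\{x>n-1\}$, a region reached only by integral curves of \eqref{eq::system} that blow up ($x\to+\infty$) at finite time and hence give metrics on complements of balls rather than complete boundaryless manifolds, while part (2) uses exactly the arcs of the level sets with $\lambda\in(2n-1,\lambda_{FYZ})$ in the quadrant $\{x>0,\,y<0\}$. Your contrapositive packaging of part (1) through a classification of the complete boundaryless metrics (which is harmless even where Section~3 leaves completeness implicit, since every candidate curve outside your list still stays in $\{x\le n-1\}$), together with the explicit monotonicity computation for $x\mapsto x^{1-n}\bigl(\tfrac{(n-1)x+2n-1}{n}\bigr)^n$ in part (2), merely spells out details the paper's proof takes for granted.
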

\begin{proof}
A straightforward application of the Maximum Principle shows that if a $U(n)$-invariant metric contains a compact minimal hypersurface, then it must contain also a minimal hypersphere $S$ in the family of $S^{2n-1}(t)$. On the other hand the above analysis easily shows that
$S$ would correspond to a point of intersection of the minimal line with the level set corresponding to such metric, but such level sets must pass through the Region ($4$) in Figure ($4$), and, as proved in Section $3$, all such sets have the property that $x(t)$ goes to infinity as $t$ goes to some finite $T_{\lambda}$, hence they do not define complete metrics without boundaries. 

As for the second point of the Corollary, it is enough to consider metrics corresponding to the arcs of level sets with $\lambda \in [2n-1,  \lambda_{FYZ})$ with $y<0$ (as the piece with $y<0$ of the yellow arc in Figure $5$). 
\end{proof}

\section{Penrose vs Hein-LeBrun Inequalities}

We are now in position to apply our analysis to the comparison between the two Penrose Inequalities available in the K\"ahler setting
mentioned in the Introduction.

Three objects evidently come into play:
\begin{itemize}
\item
the existence and volume $V$ of an outermost minimal surface $\Sigma$;
\item
the  existence and volume  of a ``canonical" divisor $\mathcal{D}$;
\item
the ADM Mass $m_{ADM}$ of an Asymptotic Locally Euclidean manifold.
\end{itemize}

As we observed in the second Section and in Proposition \ref{minsp}, in order to have a stable minimal surfaces and to have an asymptotically euclidean end,
we must look at metrics corresponding to the arcs of level sets with $\lambda \in [2n-1,  \lambda_{FYZ})$ with $y<0$. On the other hand 
as proved in Section $2$, such metrics are defined on $\CC ^n \setminus B_{R_{\lambda}}$ for some finite $R_{\lambda}$, which of course does not contain any compact divisor. The existence of a divisor can indeed happen only when and where the level set intersects the line of admissible metrics as we have seen in Subsection \ref{inco}. Hence  the presence of a divisor and of a stable minimal sphere in the family $S^{2n-1}(t)$ are mutually exclusive for $U(n)$-invariant scalar flat metrics. It is then natural to conjecture that this phenomenon is in fact more general and does not depend on the extra symmetries we imposed.

Yet, we now want to check the validity of the Penrose Inequality for the metrics we considered. To do so we first observe that the stable minimal surface we found in the previous section is indeed an outermost minimal surface: in fact, it is easy to check that the interior of any euclidean sphere is strongly convex and formula \ref{thm :meancurvature} shows that all euclidean spheres outside the stable minimal one have mean curvature of the same sign. Hence a straightforward application of the maximum principle prevents the existence of a compact minimal surface outside the stable one. We then need to compute first the volume $V(\Sigma)$ of the stable minimal sphere we found. Recall that in order to associate a metric to a (piece of) level set we fix a given point $(x_0,y_0)$ on it and we impose this to be the 
point corresponding to $t=0$, hence getting rid of undefined scaling factors.  In what follows we then choose $(x_0,y_0)$ to be the point corresponding to the unique stable minimal surface we found. 

Recall that 
\begin{equation}
V(S^{2n-1}(t) ) 
= \frac{(2\pi)^n}{(n-1)!} \left. u_t^{n-1}\sqrt{2u_{tt}} \right. \,\, .
\end{equation}

Formula (\ref{vinteg}) readily gives $u_t (0) = 1$ and $u_{tt} (0) = 1 + x_0 + y_0$, hence

\begin{equation} \label{voll}
 V(\Sigma) = \frac{(2\pi)^n}{(n-1)!} \sqrt{2(\frac{1-n+x_0}{n}}) \,\, ,
\end{equation}
where $n-1 < x_0 \leq 2n-1 $

We are then left with the problem of computing the ADM Mass for all scalar flat metrics studied above. In fact we will compute 
this quantity for a broader class of metrics, being of natural independent interest.

Recall that for any ALE scalar flat metric one has (see Lemma $7.2$ in (\cite{ap})), in a suitable choice of euclidean coordinates at infinity,
\begin{equation*}
u(t) = e^t + m\frac{e^{(2-n)t}-1}{2-n} + a +O(e^{(1-n)}), \qquad t \to +\infty,
\end{equation*}
and it is well known that $m$ is indeed the AMD mass which can then be computed by $m=\lim_{t \to +\infty} \frac{e^{(n-2)t}}{n-1}(u_t-u_{tt})$. 

It is important to notice that in all our examples, euclidean coordinates are indeed the fixed coordinates $z_i$, and in these 
coordinates metrics on the appropriate level sets are automatically ALE without need to use any scaling.

We now want to find the relationship between $m$ and the initial point $(x_0,y_0)$:

\begin{enumerate}

\item $x_0=0$

In the case studied in Example \ref{ex::BS}, for $t \to +\infty$ we have
\begin{equation*}
u_t = \sqrt[n-1]{B+e^{(n-1)t}}
%= e^t \sqrt[n-1]{1+Be^{(1-n)t}}
= e^t+\frac{B}{(n-1)}e^{(2-n)t} + O(e^{(1-n)t}),
\end{equation*}
where $B= - y_0/(1+y_0)$..
As a consequence
\begin{align*}
u_{tt} 
&= e^{(n-1)t}(B+e^{(n-1)t})^{(2-n)/(n-1)}, \\
&= e^t(1+Be^{(1-n)t})^{(2-n)/(n-1)}, \\
&= e^t -\frac{B(2-n)}{n-1}e^{(2-n)t} + O(e^{(1-n)t}) \qquad t \to +\infty
\end{align*}
As a consequence
\begin{equation*}
m =\lim_{t \to +\infty} \frac{e^{(n-2)t}}{n-1} \left( \frac{Be^{(2-n)t}}{n-1} - \frac{B(2-n)e^{(2-n)t}}{n-1} \right) = \frac{B}{n-1}
\end{equation*}
Note that one has $m = \frac{-y_0}{((n-1)(1+y_0)}$ thus metrics related to initial point $(0,y_0)$ with positive $y_0$  has negative ADM mass.

\bigskip

\item $y_0=0$

For  example \ref{ex::HH}, for $t \to +\infty$ we have
\begin{equation*}
u_t = \sqrt[n]{B+e^{nt}}
%= e^t \sqrt[n]{1+Be^{(-nt}}
= e^t+\frac{B}{n}e^{(1-n)t} + O(e^{-nt}),
\end{equation*}
where $B= - x_0/(1+x_0)$.
As a consequence
\begin{align*}
u_{tt} 
&= e^{nt}(B+e^{nt})^{(1-n)/n}, \\
&= e^t(1+Be^{-nt})^{(1-n)/n}, \\
&= e^t -\frac{B(1-n)}{n}e^{(1-n)t} + O(e^{-nt}) \qquad t \to +\infty
\end{align*}
As a consequence
\begin{equation*}
m =\lim_{t \to +\infty} \frac{e^{(n-1)t}}{n-1} \left( \frac{Be^{(1-n)t}}{n} - \frac{B(1-n)e^{(1-n)t}}{n} \right) = \frac{B}{n}
\end{equation*}
Note that one has $m = \frac{-x_0}{((n-1)(1+x_0)}$ thus metrics related to initial point $(0,x_0)$ with positive $x_0$  has negative ADM mass.

\bigskip

\item $x_0y_0\neq 0$

%\begin{align*}
%u_{tt} &= -\frac{|x_0|^{1/n}}{n}\frac{|x|^{-1/n}}{x}(-nx(1+x+\sgn{y_0}\lambda^{1/n}|x|^{1-1/n}))\\
%&= |x_0|^{1/n}|x|^{-1/n}(1+x+\sgn{y_0}\lambda^{1/n}|x|^{1-1/n})
%\end{align*}
\begin{equation*}
u_t = \left|\frac{x_0}{x}\right|^{1/n}, \quad u_{tt}= \left|\frac{x_0}{x}\right|^{1/n} + x\left|\frac{x_0}{x}\right|^{1/n} + y_0|x_0|^{2/n-1}|x|^{1-2/n}
\end{equation*}
Since $x \sim x_0e^{-nt}$ for $t \to +\infty$ we have
\begin{align*}
m &= \lim_{x \to \infty} \frac{e^{(n-2)t}}{n-1}\left(-x\left|\frac{x_0}{x}\right|^{1/n} - y_0|x_0|^{2/n-1}|x|^{1-2/n}\right) \\
& = \lim_{x \to \infty} \frac{e^{(n-2)t}}{n-1}\left(-x_0e^{(1-n)t} - y_0e^{(2-n)t}\right)
\end{align*}
   whence
   \begin{equation}
   m = -\frac{y_0}{(n-1)}.
   \end{equation}

\end{enumerate}

Penrose Inequality (\ref{RiemannianPenrose}) then boils down to ask whether

$$ -\frac{y_0}{(n-1)} \geq 1+x_0+y_0 = -\frac{n+y_0}{n-1} \,\, , $$

which is trivially true in every dimension.

Corollary \ref{phl} is then proved.


\begin{thebibliography}{10}

\bibitem{ap}
Arezzo, C.,  Pacard, F. (2006).  Blowing up and desingularizing constant scalar
  curvature {K}\"ahler manifolds, {\it Acta Math.} \textbf{196}, 179--228. 

\bibitem{ap2} Arezzo, C.,  Pacard, F. (2009). Blowing up K\"ahler manifolds with constant scalar curvature, II.  {\it  Ann. of Math.}, 685-738.

\bibitem {AM}
C. Arezzo C. and Micallef M. J. (2000) Minimal surfaces in flat tori,  {\it Geom. Funct. Anal.}, {\bf 10}, no. 4, 679-701.

\bibitem {AS}
C. Arezzo C. and J. Sun J. (2013) A variational characterization of $J$-holomorphic curves in symplectic manifolds,  {\it J. Reine Angew. Math.}, DOI: 10.1515/crelle-2013-0097.


\bibitem{arn:59} Arnowitt, R., Deser, S., Misner, C. W. (1959). Dynamical structure and definition of energy in general relativity.  {\it Physical Review}, {\bf 116}, 1322.

\bibitem{bar:86} Bartnik, R. (1986). The mass of an asymptotically flat manifold.  {\it Comm. on pure and applied mathematics}, {\bf 39}, 661-693.

\bibitem{bir:1917} Birkhoff, G.(1917) Dynamical systems with two degrees of freedom.  {\it Trans. Amer. Math.}, {\bf 4}, 199-300.

\bibitem{bes:87} Besse, A.L. (1987). Einstein Manifolds. Ergebnisse der Mathematik und ihrer Grenzgebiete (3), vol. 10. Springer, Berlin.

\bibitem{Bra:2001} Bray, H. L. (2001). Proof of the Riemannian Penrose inequality using the positive mass theorem.  {\it Journ. Diff. Geom.}, {\bf 59}, 177-267.

\bibitem{Bray:2009} Bray, H. L., Lee, D. A. (2009). On the Riemannian Penrose inequality in dimensions less than eight.  {\it Duke Math. Journ.}, {bf 148}, 81-106.

\bibitem{B} S. Brendle, Constant mean curvature surfaces in warped product manifolds, {\it Publ. Math. Inst.IHES} {\bf 117} (2013), 247--269.
	
\bibitem{Br} Bryant R.L. (2001). Bochner-K\"ahler metrics.  {\it J. Amer. Math. Soc.}, {\bf 14}, 623-715.

\bibitem{C} Calabi E. (1979).  M\'etriques k\"ahl\'eriennes et fibr\'es holomorphes
 {\it Ann. Scient.  \'Ecole Norm. Sup.}, {\bf{12}}, 269-294. 

\bibitem{Cao} Cao, H.D.,  (1994). Existence of gradient K\"hler-Ricci solitons, Elliptic and Parabolic
Methods in Geometry (Minneapolis, MN, 1994), A. K. Peters, Wellesley, MA, 1-16.

\bibitem{Ch} Chodosh O. (2016). Large isoperimetric regions in asymptotically hyperbolic manifolds, {\it Comm. Math. Phys.} {\bf 343}, 393--443.
	
\bibitem{CEV}  Chodosh O., Eichmair M. and Volkmann A. (2017) Isoperimetric structure of asymptotically conical manifolds {\it  J. Diff. Geom.} {\bf{105}}, 1-19.

\bibitem{chr:86} Chru\'sciel, P. (1986). Boundary conditions at spatial infinity. In topological properties and global structure of space-time. Springer, Boston, MA,. 49-59.

\bibitem{Egu:79} Eguchi, T.,  Hanson, A. J. (1979). Self-dual solutions to Euclidean gravity. Annals of Physics, 120(1), 82-106.

	\bibitem{EM} M. Eichmair and J. Metzger, (2012). On large volume preserving stable CMC surfaces in initial data sets, {\it J. Diff. Geom.} {\bf 91} , 81--102. 
	
	\bibitem{EM2} M. Eichmair and J. Metzger (2013). Unique isoperimetric foliations of asymptotically flat manifolds in all dimensions, {\it Invent. Math.} {\bf 194} , 591-630.

\bibitem{FIK} Feldman M., Ilmanen T. and Knopf D. (2003). Rotationally Symmetric Shrinking and Expanding Gradient K\"ahler-Ricci Solitons, {\it  J. Differential Geom.}  {\bf 65} 169-209.

\bibitem{FS} Fraser A. and Schoen R. (2023). Stability and largeness properties of minimal surfaces in higher codimension,  arXiv:2303.07423.

\bibitem{fu:16} Fu, J., Yau, S. T., Zhou, W. (2016). On complete constant scalar curvature K\" ahler metrics with Poincar\'e-Mok-Yau asymptotic property.  {\it Comm. in Anal. and Geom.} {\bf 24}, 521-557

\bibitem{vh} Han, J., Viaclovsky J. (2019). Deformation theory of scalar-flat K\"ahler ALE surfaces,
{\it American Jour. of Math.}
{\bf 141}, 1547-1589

 \bibitem{HL} He, W. and  Li, J. (2018). Rotation invariant singular K\"ahler metrics with constant scalar curvature on $\CC^n$, arXiv:1808.03925.
\bibitem{hei:16} Hein, H. J. and LeBrun, C. (2016). Mass in K\"ahler geometry. {\it Communications in Mathematical Physics}, {\bf 347}, 183-221.

\bibitem{H} Lan-Hsuan Huang, Foliations by stable spheres with constant mean curvature  for isolated systems with general asymptotics, {\it Comm. Math. Phys.} {\bf 300} (2010), 331--373.

\bibitem {LS}
 Lawson H. B. and Simons J. (1973). On stable currents and their application to global problems in real and complex geometry, {\it Ann. of Math.}, {\bf 98}, 427-450.

\bibitem{hui:01} Huisken, G.,  Ilmanen, T. (2001). The inverse mean curvature flow and the Riemannian Penrose inequality. {\it Journ.  Diff. Geom.}, {\bf 59}, 353-437.

\bibitem{hy} Huisken G. andYau  S.T.  (1996). Definition of center of mass for isolated physical systems and unique foliations by stable spheres with constant mean curvature, {\it Invent. Math.} {\bf 124}, 281-311.

\bibitem{HS}  Hwang A.D. and Singer M.A. (2002). A momentum construction for circle-invariant K\"ahler metrics. {\it  Trans. Amer. Math. Soc.} 
{\bf 354}, 2285-2325.

\bibitem{Joyce} Joyce, D. (2000). Compact manifolds with special holonomy, Oxford University Press.

\bibitem{kr}  Kronheimer P. B. (1989). {\em The construction of ALE spaces as hyper-K\"ahler quotients}, {\it Journal of Differential Geometry}, {\bf 29},  665-683.
	
\bibitem{leb:88} Lebrun, C. (1988). Counter-examples to the generalized positive action conjecture. {\it  Communications in mathematical physics}, {\bf 118}, 591-596.

\bibitem{Loi:18} Loi, A., Salis, F., Zuddas, F. (2018). Two conjectures on Ricci-flat K\"ahler metrics. {\it Math. Zeit.}, {\bf 290}, 599-613.

\bibitem{Loi:21} Loi, A., Salis, F.,  Zuddas, F. (2021). Extremal K\"ahler Metrics Induced by Finite or Infinite-Dimensional Complex Space Forms. {\it Journal of Geom. Anal.}, {\bf 31}, 7842-7865.

\bibitem{Mic} Micallef, M. (1984). Stable minimal surfaces in Euclidean space, {\it J. Differential Geom.} {\bf 19}, 57-84.

\bibitem{Palais} Palais, R. (1979). The Principle of Symmetric Criticality. {\it Communications in mathematical physics}, {\bf 69}, 19-30.

\bibitem{pen:73} Penrose, R. (1973). Naked singularities. {\it Annals of the New York Academy of Sciences}, {\bf 224}, 125-134.

\bibitem{sch:79} Schoen, R., Yau, S. T. (1979). On the proof of the positive mass conjecture in general relativity. {\it Communications in Mathematical Physics}, {\bf 65}, 45-76.

\bibitem{sim:91} Simanca, S. R. (1991). K\"ahler metrics of constant scalar curvature on bundles over $CP^{n-1}$. {\it Math. Annalen}, {\bf 291}, 239-246.

\bibitem{simons} Simons, J. (1968). Minimal Varieties in Riemannian Manifolds. {\it Annals of Math.}, {\bf 88}, 62-105.

\bibitem{TL} Shun-ichi Tachibana S.-I. and Liu R.C. (1970). Notes on K\"ahlerian metrics with vanishing Bochner curvature tensor. {\it Kodai Math. Sem.}, {\bf 22}, 313-321.

\bibitem{T} Taskent, S. (2021). Rotationally symmetric extremal K\" ahler Metrics on $\mathbb C^ n $ and $\mathbb C^ 2\setminus\{0\} $, arXiv:2105.14561.

\bibitem{Trinca} Trinca, F. (2022). Barrier Methods for minimal submanifolds in the Gibbons-Hawking ansatz. {\it New York Journal of Math}, {\bf 28}, 835-867.


\bibitem{wit:81} Witten, E. (1981). A new proof of the positive energy theorem. {\it Communications in Mathematical Physics}, {\bf 80}, 381-402.



\end{thebibliography}
\end{document}